\documentclass[12pt,english]{article}

\usepackage{mathptmx}
\DeclareMathAlphabet{\mathcal}{OMS}{cmsy}{m}{n}
\DeclareSymbolFont{largesymbols}{OMX}{cmex}{m}{n}
\usepackage{geometry}
\geometry{verbose,tmargin=1in,bmargin=0.7in,lmargin=0.7in,rmargin=0.8in} 
\usepackage{xcolor}
\usepackage{soul}
\usepackage{array}
\usepackage{bbding}
\usepackage{multirow}
\usepackage[fleqn]{amsmath}
\usepackage{amssymb}
\usepackage{amsthm}
\usepackage{graphicx}
\usepackage{natbib}
\usepackage{lscape}
\usepackage[hyphens]{url}
\usepackage{hyperref}
\usepackage{comment}
\usepackage{bm}
\usepackage[title]{appendix}
\usepackage{longtable}
\usepackage{mathtools}
\usepackage{chngcntr}
\usepackage{authblk}
\usepackage{babel}
\usepackage{dsfont}
\usepackage{subcaption}
\usepackage{fancyhdr}
\usepackage{cleveref}
\usepackage{abstract}
\usepackage{booktabs}
\usepackage{tabularx}   
\usepackage{array}      

\makeatletter
\allowdisplaybreaks
\sloppy

\date{}

\fancypagestyle{plain}{%
	\fancyhf{} 
	
	\chead{\footnotesize \textcolor{gray}{\textit{\shortauthors -- \runningtitle}}}
	\rhead{\footnotesize \textcolor{gray}{\thepage}}
}
\pagestyle{plain}

\setlength{\headheight}{12.136pt}  
\addtolength{\topmargin}{-0.136pt} 

\makeatletter
\def\blfootnote{\xdef\@thefnmark{}\@footnotetext}
\makeatother

\makeatletter
\def\titlepageext{
	\begin{center}	
	{\parindent0pt
		\rule{0.9\textwidth}{1pt}
		\begin{minipage}[t]{0.25\textwidth}
			\small {\it Keywords:}\\
			\keyword
		\end{minipage}%
		\hspace{3mm}
		\begin{minipage}[t]{0.6\textwidth}
			\small \abstract
		\end{minipage}%
		
		\rule{0.9\textwidth}{2pt}
	}
	\end{center}

	\blfootnote{* Corresponding author. E-mail address: \href{mailto:\corresemail}{\corresemail}.}
}
\makeatother

\usepackage[mathlines, pagewise]{lineno}
\usepackage{etoolbox} 

\newcommand*\linenomathpatchAMS[1]{%
	\expandafter\pretocmd\csname #1\endcsname {\linenomathAMS}{}{}%
	\expandafter\pretocmd\csname #1*\endcsname{\linenomathAMS}{}{}%
	\expandafter\apptocmd\csname end#1\endcsname {\endlinenomath}{}{}%
	\expandafter\apptocmd\csname end#1*\endcsname{\endlinenomath}{}{}%
}

\expandafter\ifx\linenomath\linenomathWithnumbers
\let\linenomathAMS\linenomathWithnumbers
\patchcmd\linenomathAMS{\advance\postdisplaypenalty\linenopenalty}{}{}{}
\else
\let\linenomathAMS\linenomathNonumbers
\fi

\linenomathpatchAMS{gather}
\linenomathpatchAMS{multline}
\linenomathpatchAMS{align}
\linenomathpatchAMS{alignat}
\linenomathpatchAMS{flalign}

\nolinenumbers


\setlength{\parskip}{0.3\baselineskip}
\setlength\parindent{2em}
\newtheorem{theorem}{Theorem}

\newtheorem{definition}{Definition}
\newtheorem{lemma}{Lemma}


\title{\textbf{Transit-MP: Transit-Prioritized Max-Pressure Control in Sparse Connected Vehicle Environments}}

\def\shortauthors{Tan et al.}
\def\runningtitle{Transit-MP}

\author[a,b]{Chaopeng Tan}
\author[c,$\ast$]{Hao Liu}
\author[a]{Dingshan Sun}
\author[a]{Marco Rinaldi}
\author[a]{Hans van Lint}
\affil[a]{Department of Transport and Planning, Delft University of Technology, Gebouw 23, Stevinweg 1, Delft, 2628CN, The Netherlands}
\affil[b]{Chair of Traffic Process Automation, Technische Universität Dresden, Hettnerstraße 3, Dresden, 01069, Germany}
\affil[c]{Department of Civil and Environmental Engineering, University of Maine, 168 College Ave., Orono, ME 04469, USA}

\def\corresemail{hao.liu1@maine.edu}

\def\abstract{
Max-pressure (MP) control stands out among real-time network traffic signal control methods due to its simplicity, decentralized nature, and theoretical stability. However, existing MP control methods have limited consideration of public transportation and do not address the network stability problem of transit-prioritized MP in partially connected vehicle (CV) environments. In this study, we propose Transit-MP, which realizes transit-prioritized MP control in partially CV environments by considering real-time vehicle occupancy and the impact of transit dwell at stations. Theoretically, we demonstrate that Transit-MP, while using different traffic state measures for upstream and downstream links for pressure calculation, still achieves road network stability even in partially CV environments. 
Note that for MP controllers in sparse CV environments, some movements may have missing CV observations, leading to link spillovers, which create the queue starvation phenomenon: a movement no longer receives the green phase despite the queue spillover. Therefore, we further propose a modified Transit-MP (mTransit-MP) that incorporates historical traffic data to address this issue. We rigorously prove that the proposed mTransit-MP can effectively avoid the queue starvation phenomenon.
Experimental results on a real-world corridor in Amsterdam with 15 transit lines and 31 stations show that our method significantly reduces the real-time vehicle and spillover count, and improves delays for both private vehicles and transit vehicles compared to a state-of-the-art MP controller for transit signal priority. In sparse CV environments, our mTransit-MP is effective in mitigating link spillovers while enhancing the overall performance of multi-modal traffic. 
}

\def\keyword{Maxpressure control \\ Transit signal priority \\ Connected vehicle \\
Network stability \\ Transit dwell}

\begin{document}
\maketitle
\titlepageext

\section{Introduction}
Traffic signal control is a critical component in managing urban traffic flows, directly impacting travel efficiency, fuel consumption, and urban air pollution \citep{guo2019urban}. Among real-time network traffic signal control methods, the Max-Pressure (MP) or Back-Pressure (BP) control algorithm stands out due to its simplicity, decentralized nature, and theoretical stability characteristic \citep{varaiya2013max}. The basic idea of an MP control is to switch the green phase to those movements exerting the highest pressure on the road network, where the movement is defined as a pair of incoming and outgoing links, and the movement pressure is calculated as the weighted traffic measure difference between the paired links. Unlike centralized network traffic signal control methods requiring extensive communication infrastructure and high computational capability \citep{lin2012efficient, yan2019network, wang2020optimizing}, MP control operates effectively with local traffic state information on the connecting links and allows each intersection to make independent signal decisions, making it scalable and cost-effective. Furthermore, the guarantees of queue stability and throughput optimality for MP control ensure that it can maintain efficient traffic flow within admissible demand regions \citep{varaiya2013max}. 

Existing studies on MP control can be generally categorized into two main streams: variants of MP control and extensions of MP control. The former stream of MP control variants focuses on investigating the use of various traffic measures for calculating movement pressure while ensuring network queue stability. The early Q-MP proposed by \citet{varaiya2013max} uses queue length, which is essentially the number of vehicles on the link due to the assumption of the point queue model, for pressure calculation. In the case of considering the priority of links with different features, such as dedicated bus lanes or short links, the weighted queue length can also be used. However, Q-MP assumes infinite queue capacities for stability analysis, which deviates from the real world. Then, \citet{gregoire2014capacity} proposed Capacity-Aware MP (CA-MP) to incorporate link capacity constraints, but its impact on road network stability has not been demonstrated. Nonetheless, these early models relied on aggregated traffic measures, such as vehicle counts, due to the limited capabilities of early detection technology.

Recent advancements in information technology have enabled real-time vehicle-to-everything (V2X) communications, allowing connected vehicles (CVs) to provide real-time location and speed data. Unlike fixed-location detectors, CVs offer two-dimensional spatiotemporal traffic observations, enhancing traffic state estimation \citep{zheng2017estimating, cao2021day, tan2019cycle, tan2021cumulative} and traffic flow management \citep{yao2019optimization,tan2025connected_robust, tan2025connected_stochastic,tan2024privacy, wang2024traffic}. 
Consequently, recent studies have leveraged CV data to calculate
movement pressure for MP control. For example, \citet{li2019position} introduced Position-Weighted MP (PW-MP), which gives more weight to queues near road inlets to mitigate spillback risks. Furthermore, \citet{wang2022learning} proposed a learning-curve-based MP (Learning-MP) that employs reinforcement learning to assign distinct weights to stopped and moving vehicles for optimized performance.

Despite these advances, the above studies only account for the instantaneous spatial distribution of vehicles, which can cause excessive delays on branch roads in unbalanced flow scenarios \citep{wu2018delay}. To address this, some studies have explored time-cumulative vehicle measures. For example, \citet{wu2018delay} proposed a Head-of-Line Delay-based MP (HD-MP) to reduce delays for smaller queues, while \citet{mercader2020max} validated that MP with average travel time for pressure calculation, termed TT-MP, performs well in field tests. Further refinements include delay-based MP (D-MP) considering cumulative queue lengths during the last decision horizon and total-delay-based MP (TD-MP) using the total delay of vehicles \citep{liu2022novel, liu2023total}, though the network queue stability is unproven in the latter study.
It is worth noting that although most of these MP controls utilizing vehicle-level information are essentially based on CV information, most of them have only proved network stability in 100\% CV scenarios or have not investigated stability explicitly. Most recently, \cite{tan2025cvmp} proposed CV-MP for MP control in partially CV environments, where the link travel time of CVs is used for pressure calculation, thus accounting for both spatial-temporal information of traffic measures. In particular, the proof of network queue stability when vehicles are partially connected and heterogeneously distributed is pioneered. However, in low CV penetration rate scenarios, some links may spill over due to the long-time absence of CV observations, leading to destabilization of the road network and degrading the signal control performance. Therefore, \emph{MP control with missing traffic observations due to low CV penetration rates needs to be further addressed.}

The other stream of MP control focuses on extending MP control to wider applications. For instance, \citet{zaidi2016back} and \citet{chen2022backpressure} integrated dynamic vehicle routing with MP control for enhanced network performance, where the local movement pressure information is used to adjust route choices of vehicles in real-time. \citet{liu2024n} proposed a novel MP algorithm that incorporates regional traffic states into the MP framework, thus achieving perimeter metering control at the boundary intersections. By modifying the calculation of pressure, \citet{xu2024smoothing} and \citet{ahmed2024c} extended MP control for arterial coordination. 
Given that signalized intersections in practice usually need to accommodate multi-modal traffic flows, \citet{xu2024ped} and \citet{liu2024max} extended MP control with consideration of the need for pedestrians crossing at intersections. 
Regarding transit vehicles, MP control, considering transit signal priority (TSP) is still in its infancy. Only a few MP control studies have achieved TSP. For instance, \citep{xu2022integrating} added hard constraints to Q-MP to force the triggering of transit signals for transit prioritization, which was shown to still maintain the network queue stability. Nevertheless, this study assumes dedicated lanes for buses. \citep{ahmed2024occ} proposed to weight the queue-based pressure of movements with average occupancy of vehicles on incoming links, i.e., OCC-MP, which prioritizes high-occupancy vehicles in mixed traffic scenarios. However, OCC-MP assumes constant occupancy for stability analysis. Though these studies have shown promising results in prioritizing transit vehicles in certain conditions, they do not consider more realistic transit operation scenarios where transit vehicles dwell at stops for passengers to board and alight. This will result in dynamic changes in transit vehicle occupancy along links, and dwelling at stops will have additional impacts on traffic operations. In addition, although the performance of the model in partially CV scenarios is tested, the stability of MP control considering TSP in partially CV scenarios is unproven.
Therefore, \emph{we believe MP control for TSP in sparse CV environments, which explicitly considers actions of more realistic transit operation, is worth investigating.}

This study proposes Transit-MP, a controller scheme designed to achieve transit-prioritized MP control in partially connected vehicle environments. The \emph{major contributions} are threefold. 1) We propose to use the occupancy-weighted link travel time of vehicles as the traffic measure for MP control, which takes into account both the spatial distribution (e.g., vehicle position distribution) and time-accumulated measures (e.g., delay) of vehicles and achieves TSP by prioritizing high-occupancy vehicles. 2) We prove that Transit-MP control while prioritizing transit vehicles achieves network queue stability even in partially CV environments. The challenge in this stability proof lies in the first-time use of different traffic states for upstream and downstream pressure calculations. 3) We address the short-link spillover problem, which is caused by missing real-time CV observations in sparse CV environments, by modifying Transit-MP with historical traffic data.

\section{Preliminaries}
\subsection{Network definition}
We model a signalized network as shown in Fig.~\ref{fig: network}. The set of signalized intersections, i.e., nodes, is denoted by $\mathcal{N}$. For a node $n \in \mathcal{N}$, the set of controlled movement is denoted by $\mathcal{M}_{n}$, which is indexed by a pair of incoming and outgoing links $(i,o)$. {Correspondingly, we use $n'$ to denote a neighboring node of $n$ that is connected by the link $o$. Then, the related movements of node $n'$ are indexed by $(o,k)$.} The signal state of movement $(i,o)$ is denoted by a binary decision variable $s_{i,o} \in \bm{s}_n$, where $s_{i,o} = 1$ indicates the green state of the signal of movement $(i,o)$ and the red state otherwise. We use $\bm{s}_n$ to denote the column vector of the signal state of node $n$ and $\bm{s}$ to denote the combination of $\bm{s}_n$ on the whole network. Correspondingly, we have $\mathcal{S}_n$ and $\mathcal{S}$ to denote the feasible space of signal states. We use $c_{i,o}$ to denote the saturated flow rate of movement $(i,o)$ at the stopline position and $r_{i,o}$ to denote the corresponding turning ratio. 

For stability analysis, we introduce fictitious source links and fictitious nodes to load traffic demand from exogenous real links, as \citet{li2019position} and \cite{tan2025cvmp} have done. The set of fictitious nodes is denoted by $\mathcal{F}$. In particular, the fictitious source links have no physical length, so they are assumed to have infinite jam densities. The traffic flow is modeled as point queues concentrated at source nodes, where the queue lengths can accumulate without bound. Note that due to the minimum vehicle time headway constraint, the inflow and outflow rates remain finite. Since real links have finite jam densities, the network is said to be unstable if the congestion spills over to the fictitious sourced links and the queue length is accumulated infinitely on the fictitious sourced link. 

Considering a mixed CV and non-CV (NV) scenario, on the incoming link of movement $(i,o)$, $\mathcal{V}_{i,o}^{cv}$, $\mathcal{V}_{i,o}^{hv}$, and $\mathcal{V}_{i,o}$ are used to denote the sets of CVs, NVs and all vehicles, respectively. Obviously, $\mathcal{V}_{i,o} = \mathcal{V}_{i,o}^{cv} \bigcup \mathcal{V}_{i,o}^{hv}$. 
For transit vehicles, we assume that they are all connected and belong to $\mathcal{V}_{i,o}^{cv}$. For all CVs, it is assumed that the occupancy information $p_v$, i.e., the number of passengers (including the driver) riding on the vehicle $v$, can be shared for traffic signal control. Note that in practice, even though the real-time occupancy information of CVs is not available, we can use historical statistics as an alternative. For example, the occupancy of private cars can be assumed as a constant value, and that of public transit vehicles can be estimated from the historical boarding and alighting data between stations \citep{kuchar2023passenger}. Since most of the CVs are currently vehicles using map navigation services, whose routes are planned in advance. Therefore, in this study, it is also assumed that their turn intentions at intersections are known.

\begin{figure}[ht]
    \centering
    \includegraphics[width=0.9\textwidth]{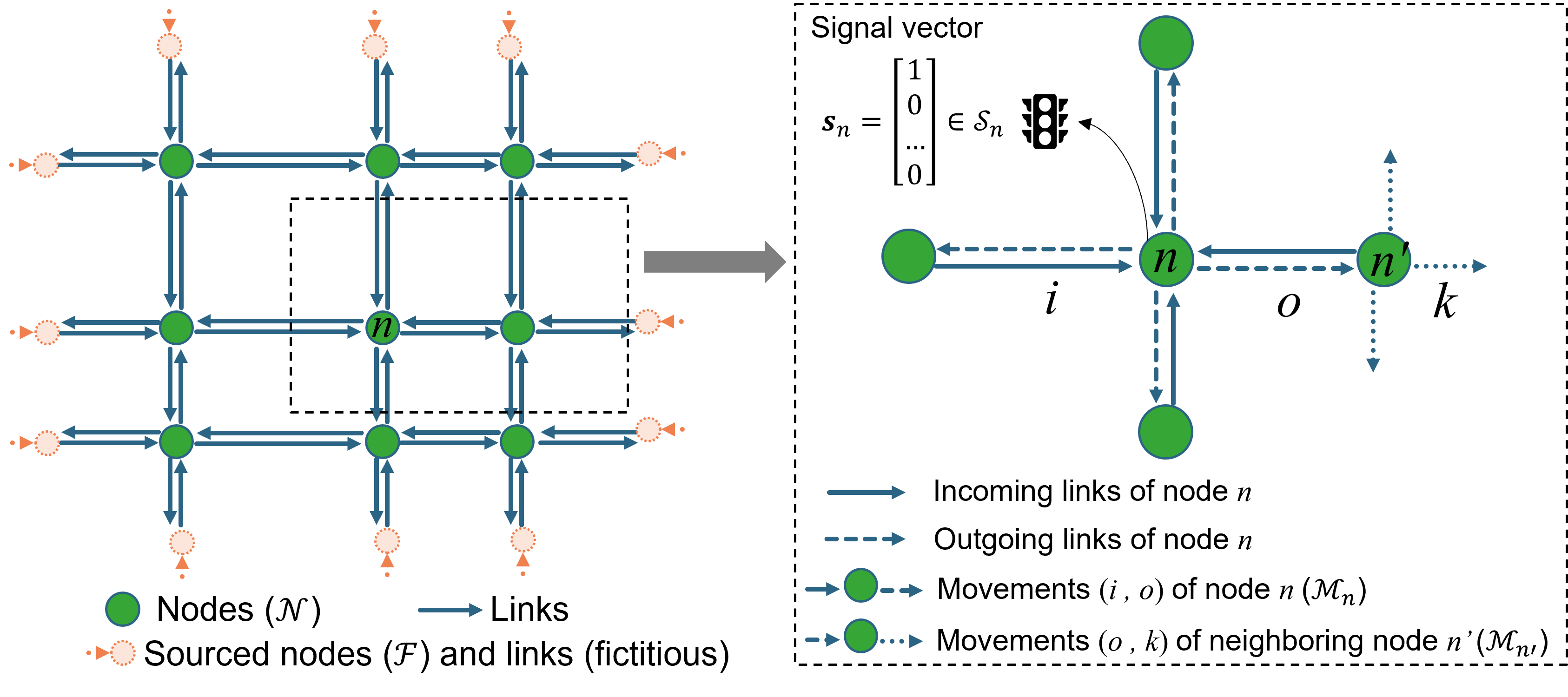}
    \caption{Network definition}
    \label{fig: network}  
\end{figure}

\subsection{CV-MP}
In the study by \citet{tan2025cvmp}, an MP control using CV data only, i.e., CV-MP, is proposed, in which the real-time link travel time of CVs is used for pressure calculation, thus accounting for both spatial-temporal information of traffic measures. Nevertheless, CV-MP is (i) designed for private traffic only, and (ii) it may suffer from spillovers on short links, i.e., fail to stabilize the network queue, in sparse CV environments due to the long-time absence of CV observations. 
In this study, we will extend CV-MP to multi-modal traffic scenarios and address the possible spillover problem in sparse CV environments. 

For a smooth read, we briefly introduce the control policy of CV-MP: 
\begin{align}
    \bm{s}^*(t) &= \arg \max_{\bm{s} \in \bm{S}} \sum_{n \in \mathcal{N}} \left( \sum_{\forall (i,o) \in \mathcal{M}_n} s_{i,o}(t) c_{i,o} \left( \sum_{v \in \mathcal{V}_{i,o}^{cv}} \tau_v(t) - \sum_{(o,\forall k) \in \mathcal{M}_{n'}} r_{o,k}(t) \sum_{v \in \mathcal{V}_{o,k}^{cv}} \tau_v(t) \right) \right), \label{eq: CV_MP}
\end{align}
where $\tau_v$ denotes the normalized link travel time of CV $v$ at the decision moment $t$ and
\begin{align}
    \tau_v = \frac{LTT_v}{\overline{ETT}_{i,o}}.
\end{align}
$LTT_v$ denotes the real-time link travel time of vehicle $v$ calculated from the moment the vehicle enters the link. $\overline{ETT}_{i,o}$ denotes the expected free flow travel time on link $i$ for movement $(i,o)$. 
The study by \citet{tan2025cvmp} has demonstrated that CV-MP can stabilize the road network queue if the following sufficient condition on CV observations is satisfied:
\begin{theorem}[Necessary condition of CV-MP stability in partially CV environments \cite{tan2025cvmp}] \label{thm: necessary condition}
    Let $\mathcal{M}_n^{p}$ denote the set of movements served by phase $p$ at intersection $n \in \mathcal{N}$. A necessary condition of CV-MP to stabilize the network queue in partially CV environments is that there must NOT exist any time $t^*$ such that 
    \begin{align}
        \mathcal{V}_{i,o}^{cv} = \varnothing \quad \text{and} \quad \rho_{i,o}(x,t^*) = \rho_{i,o}^{max} \quad \forall x \in [0,L_i] \quad \forall (i,o) \in \mathcal{M}_n^{p} \label{eq: necessary for CV-MP}
    \end{align}
    for all phases at all intersections, where $\rho_{i,o}(x,t^*)$ denotes the traffic density at position $x$ (measured from the inlet), moment $t^*$ on the incoming link of movement $(i,o)$ and $\rho_{i,o}^{max}$ denotes the maximum traffic density, and $L_i$ denotes the corresponding link length. 
\end{theorem}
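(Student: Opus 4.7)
The plan is to argue by contraposition: assume that at some time $t^*$ the condition in~\eqref{eq: necessary for CV-MP} holds for some phase $p$ at some intersection $n$, and derive that CV-MP cannot keep the fictitious source queues bounded.

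First, I would inspect the pressure expression in~\eqref{eq: CV_MP} at $t^*$. For every movement $(i,o) \in \mathcal{M}_n^p$ the upstream sum $\sum_{v \in \mathcal{V}_{i,o}^{cv}} \tau_v(t^*)$ vanishes because the CV set is empty, while the subtracted downstream term $\sum_{(o,k) \in \mathcal{M}_{n'}} r_{o,k} \sum_{v \in \mathcal{V}_{o,k}^{cv}} \tau_v(t^*)$ is always non-negative. Hence every such movement contributes at most zero, so the total pressure of phase $p$ at node $n$ is non-positive at $t^*$.

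Second, I would establish persistence of this degenerate state using traffic-flow arguments. Because each incoming link of $\mathcal{M}_n^p$ sits at jam density $\rho_{i,o}^{max}$ throughout its length, the fundamental diagram forbids any upstream inflow, so no new vehicle---and in particular no new CV---can reach these links until some vehicle first discharges at the stopline. Discharge requires a phase containing these movements to receive green, but CV-MP selects the phase of maximum pressure, and any competing phase that contains even a single observed CV on an upstream link yields a strictly positive upstream contribution that dominates phase $p$'s non-positive score. Consequently phase $p$ is not served, the CV set remains empty, the link remains jammed, and the snapshot condition reproduces at every subsequent decision epoch.

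Third, I would close the argument via the fictitious-source-link framework of Section~2.1. With phase $p$ permanently frozen off, vehicles feeding the jammed links pile up and the spillover propagates backward link by link until it reaches a fictitious source link, whose point queue then grows without bound under positive exogenous inflow. This contradicts the stability definition, completing the contrapositive. The step I expect to be most delicate is the persistence argument: I need to rule out the case where tie-breaking lets CV-MP select phase $p$ when every competing phase is also momentarily pressure-less. I would handle this either by assuming an adversarial tie-breaking rule consistent with the worst-case viewpoint of a necessary-condition statement, or by invoking continual exogenous CV arrivals elsewhere in the network to guarantee at least one strictly positively-pressured alternative phase exists, which is enough to prevent phase $p$ from clearing the jam faster than upstream demand accumulates.
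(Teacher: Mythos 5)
Your proposal is correct and follows essentially the same route as the paper, which argues (deferring details to \citet{tan2025cvmp}) that once a phase's movements are at jam density with no CVs, no new CV observations can ever arrive, the phase pressure stays at zero despite the NV spillover, the phase is starved of green, and the resulting spillback destabilizes the fictitious source queues. Your explicit treatment of the tie-breaking corner case and of the spillback propagation is a more careful elaboration of the same contrapositive argument that the paper only sketches.
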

Eq.~\eqref{eq: necessary for CV-MP} can be interpreted as at moment $t^*$, the traffic density of all movements in the phase $p$ has reached its maximum while no CVs are observed. Obviously, in such a case, these movements will no longer have CV observations; thus, the phase pressure will always be 0 despite the presence of spillover by NVs. Please see \citet{tan2025cvmp} for a more detailed proof of Theorem \ref{thm: necessary condition}. 

The opposite of Theorem \ref{thm: necessary condition} is to say that if the condition of Eq.~\eqref{eq: necessary for CV-MP} occurs, CV-MP can no longer stabilize the network queue. Note that, in sparse CV environments, there is a high probability that the condition of Eq.~\eqref{eq: necessary for CV-MP} will happen, especially for short links, as the traffic density of short links is easier to reach the maximum. 
In this study, when we extend CV-MP to multi-modal traffic scenarios, we will also design a modification mechanism to address this unsolved problem in \cite{tan2025cvmp}.

\section{Transit-MP in partially CV environments}
\subsection{Transit-MP}

Fig. \ref{fig: LTT} illustrates the approaching process of CVs on a link. For transit vehicles, they may dwell at transit stations for passengers to board and alight. Obviously, the signal priority for transit vehicles is not necessary until they leave the nearest station to the intersection. Here, we introduce a binary parameter $\beta_v$ to indicate whether the vehicle contributes to the pressure calculation according to their real-time position $x_v$ and vehicle type. 
\begin{align}
    \beta_v = \begin{cases}
        1, \text{ if } \alpha_v = 0, \\
        1, \text{ if } \alpha_v = 1, \text{ and } x_v \geq L_{ts}, \\
        0, \text{ if } \alpha_v = 1, \text{ and } x_v < L_{ts}.
    \end{cases} \label{eq: beta}
\end{align}
where $\alpha_v$ denotes the vehicle type; $\alpha_v = 0$ indicates that the vehicle is a private car and, otherwise, a transit vehicle; $L_{ts}$ denotes the position of the nearest transit station to the intersection; $x_v$ is the real-time position of CV $v$. Both positions are measured from the inlet of the link. $\beta_v = 1$ indicates that the vehicle will contribute to the pressure calculation. Eq.~\eqref{eq: beta} means that all connected private cars ($\alpha_v = 0$) will contribute to the pressure calculation, and connected transit vehicles will only contribute after they pass the nearest transit station, i.e., $x_v \geq L_{ts}$. 

Since the actual dwell time of transit vehicles is related to the number of boarding and alighting passengers \citep{kwesiga2025analysis}, we only determine $\beta_v$ based on the real-time location of transit vehicles, which can be achieved with simple connected vehicle functionality. However, if transit vehicles are required to run strictly according to the timetable, i.e., when departure times at stations can be accurately predicted, $\beta_v$ can be determined in a more precise manner (see Appendix \ref{appendix: beta} for details).

\begin{figure}[ht]
    \centering
    \includegraphics[width=0.7\textwidth]{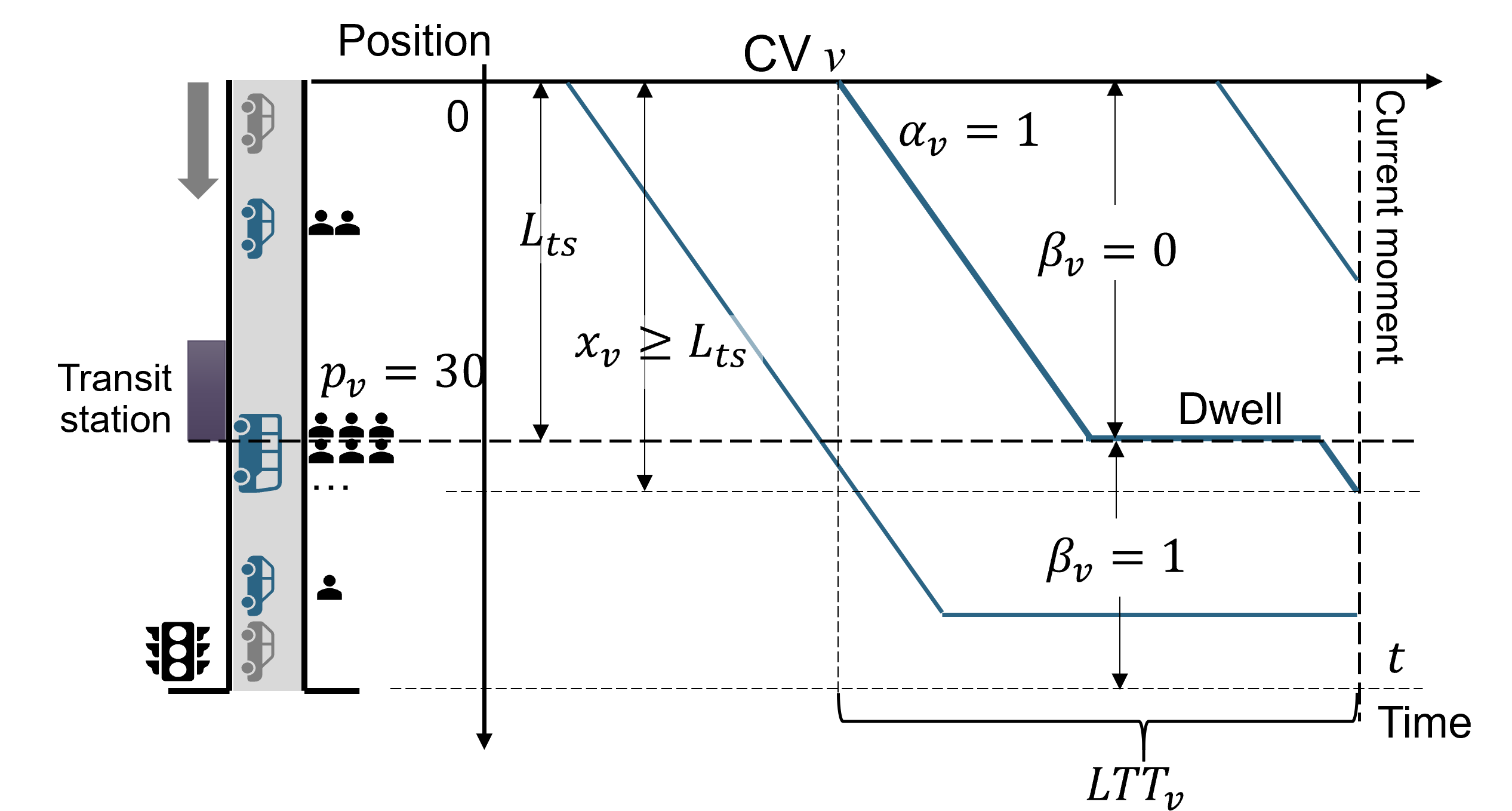}
    \caption{Vehicle information for $w_{i,o}(t)$}
    \label{fig: LTT}  
\end{figure}

Then, at time step $t$, the proposed Transit-MP in partially CV environments is applied by: 
\begin{align}
    \bm{s}^* &= \arg \max_{\bm{s} \in \mathcal{S}} \sum_{n \in \mathcal{N}} \left( \sum_{\forall( i, o) \in \mathcal{M}_{n}} s_{i,o} c_{i,o} \left(\sum_{v \in \mathcal{V}_{i,o}^{cv}} \beta_v p_v \tau_v - \sum_{ (o,\forall k)\in \mathcal{M}_{n'}} r_{o,k}\sum_{v \in \mathcal{V}_{o,k}^{cv}} \beta_v\tau_v \right)\right) \label{eq: transit MP} 
\end{align}
where we let 
\begin{align}
    c_{i,o} = \begin{cases}
        0 \quad \text{if} \quad \sum_{v \in \mathcal{V}_{i,o}^{cv}} \beta_v \tau_v - \sum_{ (o,\forall k)\in \mathcal{M}_{n'}} r_{o,k}\sum_{v \in \mathcal{V}_{o,k}^{cv}} \beta_v\tau_v <0 \\
        \hat{c}_{i,o} \quad \text{else}
    \end{cases}. \label{eq: c= 0 condition}
\end{align}
Recall that $p_v$ denotes the occupancy of the vehicle. The time factor $t$ is omitted for simplicity, $\hat{c}_{i,o}$ is the average saturated flow rate, and the other parameters are the same as in Fig.~\ref{fig: network}. In particular, Eq.~\eqref{eq: c= 0 condition} means that if the original pressure weight without occupancy information, i.e., $\sum_{v \in \mathcal{V}_{i,o}^{cv}} \beta_v \tau_v - \sum_{ (o,\forall k)\in \mathcal{M}_{n'}} r_{o,k}\sum_{v \in \mathcal{V}_{o,k}^{cv}} \beta_v\tau_v $ is negative, which indicates a longer queue on the outgoing link $o$ compared to the incoming link $i$ for the movement $(i,o)$, the saturated flow $c_{i,o}$ is forced to be 0. This correction is posed for our theoretical stability analysis in Section \ref{sssec: stability of transit-mp}.  
Given the optimal signal policy of Transit-MP, we have 
\begin{align} 
    s_{i,o}^* c_{i,o} \left(\sum_{v \in \mathcal{V}_{i,o}^{cv}} \beta_v \tau_v - \sum_{ (o,\forall k)\in \mathcal{M}_{n'}} r_{o,k}\sum_{v \in \mathcal{V}_{o,k}^{cv}} \beta_v\tau_v \right) \geq 0 \label{eq: positive pressure} \\
    s_{i,o}^* c_{i,o} \left(\sum_{v \in \mathcal{V}_{i,o}^{cv}} \beta_v p_v \tau_v - \sum_{ (o,\forall k)\in \mathcal{M}_{n'}} r_{o,k}\sum_{v \in \mathcal{V}_{o,k}^{cv}} \beta_v\tau_v \right) \geq 0 \label{eq: positive occupancy pressure}
\end{align}
for $\forall( i, o) \in \mathcal{M_N}$. The latter is obtained as $p_v \geq 1$. Eqs.~\eqref{eq: positive pressure} and \eqref{eq: positive occupancy pressure} ensure that the movement pressure is non-negative. 

Compared to CV-MP presented in Eq. \ref{eq: CV_MP}, the proposed Transit-MP further incorporates the real-time occupancy information of CVs and excludes the impact of transit dwell at stations. Essentially, Transit-MP prioritizes the movement with the greatest total passenger travel time from CV observations. 
In particular, as suggested by \cite{ahmed2024occ}, we only incorporated the occupancy information of the upstream movement for pressure calculation to avoid downstream supply issues. This is because the downstream term actually reflects the supply ability of the downstream link to accommodate vehicles, while a larger number of passengers downstream does not necessarily mean a larger number of vehicles on the link if there are high occupancy transit vehicles.

However, it is worth clarifying that, while OCC-MP by \citet{ahmed2024occ} also utilized occupancy information of upstream movements for TSP, they multiplied the average upstream occupancy $\overline{p}_{i,o}$ with the final movement pressure, i.e., $\overline{p}_{i,o}(\sum_{v \in \mathcal{V}_{i,o}} 1 - \sum_{ (o,\forall k)\in \mathcal{M}_{n'}} r_{o,k}\sum_{v \in \mathcal{V}_{o,k}} 1)$ in vehicular form. Besides, they only proved the stability of the OCC-MP at an isolated intersection in fully connected vehicle environments, given a very strong assumption that $\overline{p}_{i,o}$ is fixed over time, and the impact of transit dwell at stations is not considered. 
In our Transit-MP, the real-time vehicular occupancy information is only incorporated in the upstream traffic state calculation with the consideration of transit dwell at stations, i.e., $\sum_{v \in \mathcal{V}_{i,o}^{cv}} \beta_v p_v \tau_v$ for upstream $(i,o)$ and $\sum_{v \in \mathcal{V}_{o,k}^{cv}} \beta_v\tau_v$ for downstream $(o,k)$. In addition, we will rigorously prove that the proposed Transit-MP also achieves stability of the road network in partially CV environments with time-varying occupancy information. 

\subsection{Network stability of Transit-MP}

\subsubsection{Definitions and Lemmas}
Let $z_{i,o}$ denote the number of vehicles of the movement $(i,o)$. Then it can be represented in density form as below:
\begin{align}
    z_{i,o}(t) = \begin{cases}
            \rho_{(i,o)}(t), \quad \text{ if } (i,o) \in \mathcal{M_F} \\
            \int_0^{L_i} \rho_{(i,o)}(x,t) \mathrm{d}x, \quad \text{ if } (i,o) \in \mathcal{M_N}
        \end{cases} \label{eq: number of vehicles}
\end{align}
where $\rho_{(i,o)}(x,t)$ denotes the traffic density at position $x$ and moment $t$ of movement $(i,o)$. As fictitious links have no physical length, their traffic density $\rho_{(i,o)}(t)$ is independent of position $x$ and can be increased infinitely. 

In density form, the Transit-MP policy in partially CV environments is rewritten as
\begin{align}
    \textit{Traffic} & \textit{ density form:} \nonumber \\
    \bm{s}^*(t) & = \arg \max_{\bm{s} \in \bm{S}} \sum_{n \in \mathcal{N}} \left( \sum_{\forall (i,o) \in \mathcal{M}_n} s_{i,o}(t) c_{i,o} \left( \int_0^{L_i} \tau^u_{i,o}(x,t) \rho_{i,o}^{cv}(x,t) \mathrm{d}x \right. \right.\nonumber \\
    & \qquad \qquad \qquad \qquad \qquad \qquad \qquad \qquad- \left.\left.\sum_{(o,\forall k) \in \mathcal{M}_{n'}} r_{o,k}(t) \int_0^{L_o} \tau^d_{o,k}(x,t) \rho_{o,k}^{cv}(x,t) \mathrm{d}x \right) \right), \label{eq: Transit-MP, density}
\end{align}
where $\tau^u_{i,o}(x,t)$ and $\tau^d_{o,k}(x,t)$ are the weights on traffic flow density of upstream and downstream movements, respectively, and
\begin{align}
    &\int_0^{L_i} \tau^u_{i,o}(x,t) \rho_{i,o}^{cv}(x,t) \mathrm{d}x = \sum_{v \in \mathcal{V}_{i,o}^{cv}} \beta_v p_v \tau_v \triangleq w^{u,cv}_{i,o}(t), \label{eq: density upstream weight}\\
    &\int_0^{L_i} \tau^d_{o,k}(x,t) \rho_{o,k}^{cv}(x,t) \mathrm{d}x = \sum_{v \in \mathcal{V}_{o,k}^{cv}} \beta_v \tau_v \triangleq w^{d,cv}_{o,k}(t), \label{eq: density downstream weight}
\end{align}
where $\rho_{i,o}^{cv}(x,t)$ and $\rho_{o,k}^{cv}(x,t)$ denote the traffic flow density of CVs; $w^{u,cv}_{i,o}(t)$ and $w^{d,cv}_{o,k}(t)$ represents the traffic states of the movements for pressure calculation of Transit-MP. 

\begin{lemma} [Upper bounds of related parameters \citep{tan2025cvmp}] \label{lemma: upper bound}
    For any $x \in [0, L_i]$, $t >0$, and $(i,o) \in \mathcal{M_N}$, the following inequalities hold if the weight $y_{i,o}(x,t)$ satisfies $0 \leq y_{(i,o)}(x,t) \leq y_{(i,o)}^{max} < \infty$.
    \begin{align}
        &\rho_{i,o}(x,t) \leq \rho_{i,o}^{max} < \infty, \label{eq: upper bounds 1}\\ 
        &q_{i,o}(x,t) \leq q_{i,o}^{max} < \infty, \label{eq: upper bounds 2} \\
        &|\partial_x q_{i,o}(x,t)| \leq \dot{q}_{i,o}^{max} < \infty \\
        &\int_0^{L_i}\int_0^{L_i} \rho_{i,o}(x,t) \rho_{i,o}(x',t) \mathrm{d} x' \mathrm{d}x \leq (L_i \rho_{i,o}^{max})^2 < \infty, \label{eq: upper bounds 4} \\
        &\int_0^{L_i} y_{i,o}(x,t)\rho_{i,o}(x,t) \mathrm{d}x \leq y_{i,o}^{max} L_i \rho_{i,o}^{max} , \label{eq: upper bounds 5} \\
        &\int_{0_{+}}^{L_{i-}} -\frac{\partial q_{i,o}(x,t)} {\partial x}  \mathrm{d} x \leq q_{i,o}^{max} , \label{eq: upper bounds 6} \\
        &\int_{0_{+}}^{L_{i-}} - y_{i,o}(x',t)\frac{\partial q_{i,o}(x',t)} {\partial x'}  \mathrm{d} x' \leq y_{i,o}^{max} L_i \dot{q}_{i,o}^{max}. \label{eq: upper bounds 7}
    \end{align}
    where $q_{i,o}(x,t)$ denotes the flow rate at position $x$ and moment $t$ of movement $(i,o)$; $\dot{q}_{i,o}(x,t) = \frac{\partial q_{i,o}(x,t)} {\partial x}$; $\rho_{i,o}^{max}$, $q_{i,o}^{max}$, and $\dot{q}_{i,o}^{max}$ are upper bounds of corresponding parameters, which are positive constants. 
\end{lemma}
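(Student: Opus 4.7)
The plan is to organise the seven inequalities into two groups: (1)--(3) are structural bounds that come from the underlying LWR-type traffic flow model on a finite-length real link, and (4)--(7) then follow by elementary integration using these structural bounds together with the hypothesis $0 \leq y_{i,o}(x,t) \leq y_{i,o}^{max} < \infty$. Because the statement is cited from \citet{tan2025cvmp}, I will essentially be reconstructing the same argument, aiming for minimal regularity assumptions on the traffic state.

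For the structural bounds, I would first invoke the fundamental diagram on a real link $(i,o) \in \mathcal{M_N}$: the density at any space--time point cannot exceed the jam density, which yields (1) with $\rho_{i,o}^{max}$; similarly, the flow rate cannot exceed the capacity, giving (2) with $q_{i,o}^{max}$. For (3), I would appeal to the fact that solutions of the LWR conservation law on a finite segment propagate along characteristics with bounded wave speeds, so $\partial_x q_{i,o}(x,t)$ admits a uniform bound $\dot{q}_{i,o}^{max}$ interpreted as the worst-case rate of spatial change of flow along the link.

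The remaining inequalities then fall out mechanically. For (4) and (5), I would pointwise replace the factors in the integrand by their suprema and integrate the resulting constants over $[0,L_i]$ (respectively over its Cartesian square), obtaining $(L_i \rho_{i,o}^{max})^2$ and $y_{i,o}^{max} L_i \rho_{i,o}^{max}$. For (6), I would apply the fundamental theorem of calculus to write $\int_{0_+}^{L_{i-}} -\partial_x q_{i,o}(x,t)\, dx = q_{i,o}(0_+,t) - q_{i,o}(L_{i-},t)$, drop the non-negative subtrahend, and invoke (2). For (7), I would pull $y_{i,o}$ out via $y_{i,o}^{max}$, bound $|\partial_x q_{i,o}|$ via (3), and integrate the resulting constant over $[0,L_i]$ to get $y_{i,o}^{max} L_i \dot{q}_{i,o}^{max}$.

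The main obstacle lies in making (3) fully rigorous: solutions of the LWR equation can develop shocks at which $\partial_x q$ is only a measure, so a pointwise Lipschitz bound is not automatic. In this context, however, (3) and hence (7) are only needed as uniform \emph{a priori} bounds that control drift terms in the subsequent Lyapunov analysis. I would therefore follow the convention of \citet{tan2025cvmp} and treat (3) as a mild regularity assumption on the admissible traffic profiles (for example, by mollifying shocks over a short spatial scale), rather than as a consequence derived from the conservation law itself. With that convention the other six inequalities are routine.
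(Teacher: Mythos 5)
Your proposal is correct and follows essentially the same route as the paper, which itself only sketches the argument (deferring details to \citet{tan2025cvmp}): the first two bounds come from the finite physical nature of real links (jam density and capacity), the third is taken as a mild regularity property of ``gradual changes in traffic flow,'' and the remaining four follow by elementary integration and the fundamental theorem of calculus. Your explicit caveat about shocks making $\partial_x q_{i,o}$ only measure-valued, and hence treating the third bound as an assumption on admissible profiles, is consistent with how the paper handles it.
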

The detailed proof of Lemma \ref{lemma: upper bound} can be found in \cite{ tan2025cvmp}. As real links have physical lengths, their traffic state parameters, $\rho_{i,o}(x,t)$ and $q_{i,o}(x,t)$ are naturally bounded, which gives the first and second inequalities. The third inequality is due to the gradual changes in traffic flow. The remaining inequalities can be easily derived from the first three inequalities.

Recall that $\mathcal{S}$ denotes the feasible space of network signal states $\bm{s}$ under predefined signal phase constraints. We further use $\mathcal{S}^{ch}$ to denote the convex hull of $\mathcal{S}$. Then, the admissible demand region $\bm{\Lambda}$ of the traffic network is defined as
\begin{definition} [Admissible demand region] \label{def: admissible}
    For a signalized network given feasible signal space $\mathcal{S}$, turning ratio $\bm{r}$, and saturated flow rate $\bm{c}$, the admissible demand region $\bm{\Lambda}$ of the traffic network is defined as:
    \begin{align}
        \bm{\Lambda} =\{\bm{a} \mid \bm{a} \preceq (\bm{I} - \bm{r})\bm{c}\bar{\bm{s}}, \quad \exists \bar{\bm{s}} \in \bm{S}^{co}\}, \label{eq: admissible}
    \end{align}
    where $\bm{a} \in \mathbb{R}^{|\mathcal{M_{F \cup N}}| \times 1}$ denotes the column vector of the average exogenous arrival demands of the network; $\bar{\bm{s}}$ denotes the long term average of signal state $\bm{s}$, i.e., $\bar{\bm{s}} = \lim_{T \rightarrow \infty}\frac{1}{T}\sum_{t=1}^T \bm{s}$; 
    $\bm{r}$ denotes the matrix of turning ratio $r_{i,o}$ and $\bm{c}$ is a diagonal matrix of saturated flow rate $c_{i,o}$. Both $\bm{r}, \bm{c} \in \mathbb{R}^{|\mathcal{M_{F \cup N}}| \times |\mathcal{M_{F \cup N}}|}$. 
\end{definition}
The definition \ref{def: admissible} implies that we can always find a $\epsilon > 0$ that makes
\begin{align}
    \bm{a} - (\bm{I} - \bm{r})\bm{c}\bar{\bm{s}} \preceq -\epsilon \bm{1}, \quad \exists \epsilon > 0 \label{eq: admissible 1}
\end{align}
for $\bm{a} \in \bm{\Lambda}^{int}$, where $\bm{\Lambda}^{int}$ denotes the interior of $\bm{\Lambda}$.

As demonstrated in \citet{tan2025cvmp}, if the exogenous demand $\bm{a} \in \bm{\Lambda}^{int}$, the vehicle link travel time $\tau_v$ is upper bounded by a positive constant $\tau^{max}$. Then, we have the following lemma about the weight of Transit-MP. 
\begin{lemma} [Properties of weights of Transit-MP] \label{lemma: weights}
    If the exogenous demand $\bm{a} \in \bm{\Lambda}^{int}$, we have
    \begin{align}
        & 0 \leq \tau^u_{i,o}(x,t) \leq \tau^{u,max}_{i,o} < \infty, \\
        & \tau^u_{i,o}(0,t) = 0, \\
        & 0 \leq |\partial_t \tau^u_{i,o}(x,t)| \leq \dot{\tau}^{u,max}_{i,o} < \infty.
    \end{align}
    where $\partial_t$ denotes the partial differentiation in terms of $t$. Similar properties also apply to $\tau^d_{i,o}(x,t)$ with upper bounds $\tau^{d,max}_{i,o}$ and $\dot{\tau}^{d,max}_{i,o}$. 
\end{lemma}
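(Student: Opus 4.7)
My plan is to leverage the weight-properties result already established for CV-MP in \citet{tan2025cvmp} and extend it to account for the two additional multiplicative factors, $\beta_v \in \{0,1\}$ and $p_v \geq 1$, that appear in Eqs.~\eqref{eq: density upstream weight}--\eqref{eq: density downstream weight}. Conceptually, $\tau^u_{i,o}(x,t)$ acts on the CV density $\rho_{i,o}^{cv}(x,t)$ so that each CV located at position $x$ contributes $\beta_v p_v \tau_v$ to the integral, while $\tau^d_{o,k}(x,t)$ analogously contributes $\beta_v \tau_v$. Once I argue that $\beta_v$ and $p_v$ are uniformly bounded and that the CV-MP arguments for $\tau_v$ carry through, the three required properties follow by essentially the same reasoning.

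I would begin with the straightforward items. Non-negativity is immediate from $\beta_v \geq 0$, $p_v \geq 1$, and $\tau_v \geq 0$. The boundary condition $\tau^u_{i,o}(0,t) = 0$ follows because any CV residing at the link inlet has just entered, so $LTT_v = 0$ and thus $\tau_v = 0$, which annihilates the entire product regardless of $\beta_v$ and $p_v$. For the upper bound, I invoke the admissibility assumption $\bm{a}\in\bm{\Lambda}^{int}$ together with the CV-MP fact that $\tau_v \leq \tau^{max}$, combined with $\beta_v \leq 1$ and $p_v \leq p^{max}$, where $p^{max}$ denotes the passenger capacity of the largest admissible vehicle type (a finite constant for any real fleet). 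This yields $\tau^u_{i,o}(x,t) \leq p^{max}\tau^{max} \triangleq \tau^{u,max}_{i,o} < \infty$, and analogously $\tau^d_{o,k}(x,t) \leq \tau^{max} \triangleq \tau^{d,max}_{o,k} < \infty$.

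The main obstacle is the bound on $|\partial_t \tau^u_{i,o}(x,t)|$, because Transit-MP introduces two new sources of temporal variation absent from CV-MP: the switch of $\beta_v$ from $0$ to $1$ when a transit vehicle passes the station position $L_{ts}$, and the discrete changes in $p_v$ when passengers board or alight. To handle this, I would first recall that in CV-MP the analogous bound follows from the continuous evolution of $\tau_v$ at rate $1/\overline{ETT}_{i,o}$ while a CV resides on the link, plus bounded turnover of CVs through any fixed $x$ controlled by the saturation flow rate and the pointwise density bound from Lemma~\ref{lemma: upper bound}. Multiplying by the bounded factor $p^{max}$ preserves this rate bound at any interior $x \neq L_{ts}$. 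The discrete events newly introduced by Transit-MP occur only at isolated positions (the station) and in bounded per-event increments (a single $\beta_v$ jump per transit vehicle traversal, and per-passenger occupancy changes capped by $p^{max}$ over bounded dwell durations), so they contribute only a bounded total variation over any finite time window and can be absorbed into a uniform constant $\dot{\tau}^{u,max}_{i,o}$. Finally, the argument for $\tau^d_{o,k}(x,t)$ and its time derivative is identical up to removing the $p_v$ factor, giving the symmetric bounds $\tau^{d,max}_{o,k}$ and $\dot{\tau}^{d,max}_{o,k}$.
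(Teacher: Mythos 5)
Your proposal is correct and follows essentially the same route as the paper: reduce the density-form weights to the vehicular quantities $\beta_v p_v \tau_v$ (resp.\ $\beta_v \tau_v$), bound them via $\beta_v\le 1$, $p_v\le p^{max}$, and $\tau_v\le\tau^{max}$ under $\bm{a}\in\bm{\Lambda}^{int}$, obtain $\tau^u_{i,o}(0,t)=0$ from $LTT_v=0$ at the inlet, and bound the time variation by the continuous growth of $\tau_v$ plus bounded discrete jumps. Your explicit accounting of the jumps in $\beta_v$ at the station and in $p_v$ from boarding/alighting is a slightly more careful rendering of a step the paper treats only through changes in $\tau_v$, but it is the same argument in substance.
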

\begin{proof}
    As $\tau^u_{i,o}(x,t)\rho_{i.o}(x,t)$ is essentially the density form of $\beta_v p_v \tau_v$, the first equality is equivalent to proving that $\beta_v p_v \tau_v$ is upper bounded. Recall that $\beta_v$ is a binary variable and $p_v$ indicates the occupancy of the vehicle, which is obviously upper bounded by the vehicle capacity $p^{max}$. Then we have $\beta_v p_v \tau_v \leq p^{max} \tau^{max} < \infty$, which equally means there exists a positive constant $\tau^{u,max}_{i,o}$ that makes $0 \leq \tau^u_{i,o}(x,t) \leq \tau^{u,max}_{i,o} < \infty$.
    
    Recall that $\tau_v = {LTT_v}/{\overline{ETT}_{i,o}}$ and $LTT_v > 0$ only when $x_v > 0$. That is to say, the vehicle participates in pressure calculation only when $x_v>0$, which obviously leads to $\tau^u_{i,o}(0,t) = 0$. 

    Regarding $|\partial_t \tau^u_{i,o}(x,t)|$, it is equivalent to considering $|\partial_t \tau_v|$, i.e., changes in vehicle link travel time. Considering a small period $\Delta t$, if the vehicle just enters in or stays on the link, $|\tau_v(t+\Delta t) - \tau_v(t)| \leq \Delta t$; if the vehicle leaves the link, $|\tau_v(t+\Delta t) - \tau_v(t)| = \tau_v(t) \leq \tau^{max}$. This leads to the existence of a positive constant $\dot{\tau}^{u,max}_{i,o}$ that makes $|\partial_t \tau^u_{i,o}(x,t)| \leq \dot{\tau}^{u,max}_{i,o} < \infty$. 
    
    Similar analysis also applies to $\tau^d_{i,o}(x,t)$. Thus, Lemma \ref{lemma: weights} is proved. 
    
\end{proof}

\begin{definition} [Traffic network stability \citep{neely2022stochastic, li2019position, tan2025cvmp}] \label{def: stability}
    Given a traffic signal control policy, the traffic network is said to be strongly stable if the following condition on the total queues holds:
    \begin{align}
        &\limsup_{T \rightarrow \infty} \frac{1}{T} \int_0^T \mathbb{E}\left( \sum_{(i,o) \in \mathcal{M_{F\cup N}}} z_{(i,o)}(t) \right) \mathrm{d}t < \infty  \label{eq: stability-definition}
    \end{align}
\end{definition}
Definition \ref{def: stability} indicates that the traffic network is stable if the signal control policy ensures that the network queues will not grow infinitely in the long term. As those real links have a physical upper-bound jam capacity, the network queues for movement $(i,o) \in \mathcal{M_N}$ are naturally bounded. Then, the network is unstable when the congestion spills over to fictitious links and the queue of movement $(i,o) \in \mathcal{M_F}$ grows infinitely. 

For brevity, hereafter we use bold to indicate the corresponding vector or matrix of a variable, e.g., $\bm{\rho}$ denotes the vector of $\rho_{i,o}$ of all movement $(i,o)$ over the network.

\begin{definition} [Lyapunov function] \label{def: lyapunov}
    Given the traffic flow density $\bm{\rho}(t)$ of the network, a Lyapunov function is defined as
    \begin{align}
        V(\bm{\rho}(t)) \equiv \frac{1}{2} \sum_{(i,o) \in \mathcal{M}_{\mathcal{F}}}  \rho_{i,o}(t)^2 + \frac{1}{2} \sum_{(i,o) \in \mathcal{M}_{\mathcal{N}}} \int_0^{L_i}\int_0^{L_i} \left(\tau_{i,o}^d(x,t)+\tau_{i,o}^d(x',t)\right) \rho_{i,o}(x,t) \rho_{i,o}(x',t) \mathrm{d} x' \mathrm{d}x,  \label{eq: lyapunov}
    \end{align} 
    where $\tau_{i,o}^d(x,t)$ is the weight that varies spatiotemporally on traffic flow density (corresponding to the downstream movement) and $\tau_{i,o}^d(x,t) \geq 0, \ \forall \ (i,o) \in \mathcal{M_N}, \ x\in [0, L_i], \, t \geq 0$, which ensures that $V(\bm{\rho}(t)) \geq 0$ and $V(\bm{\rho}(t)) = 0$ if and only if $\bm{\rho}(t)=0$.
\end{definition}

The Lyapunov function is essentially the total sum of the double integral of traffic flow density with certain weights. In our Transit-MP, traffic states of upstream and downstream movements are calculated differently, i.e., upstream incorporates vehicle occupancy and downstream does not. The Lyapunov function defined in this study uses the downstream weight only, which is essentially the sum of the double integral of the travel-time-weighted traffic flow density. 

Reproduced from \citet{li2019position} and \citet{tan2025cvmp}, a sufficient condition for network stability is derived as
\begin{lemma} [Sufficient condition for traffic network stability] \label{lemma: sufficient condition}
    Given $V(\bm{\rho}(t))$ defined in Eq.~\eqref{eq: lyapunov}, if there exist positive and upper bounded constants $K$ and $\epsilon'$, i.e., $0<K<\infty$ and $0<\epsilon'<\infty$, such that the Lyapunov drift 
    \begin{align}
    \mathbb{E}^{\bm{\rho}(t)} \left(\frac{\mathrm{d} V(\bm{\rho}(t))}{\mathrm{d} t} \right) \leq K - \epsilon'  \mathbb{E} \left(\sum_{(i,o) \in \mathcal{M_{F\cup N}}} z_{(i,o)}(t) \right) \label{eq: stability-condition}
    \end{align}
    holds for $\forall t\geq0$ and all possible $\bm{\rho}(t)$, the traffic network is stable by satisfying Eq.~\eqref{eq: stability-definition}.
\end{lemma}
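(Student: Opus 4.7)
The plan is to use a standard Foster–Lyapunov drift argument to convert the instantaneous drift bound in Eq.~\eqref{eq: stability-condition} into the long-run time-averaged queue bound in Eq.~\eqref{eq: stability-definition}. First, I would take the total expectation over $\bm{\rho}(t)$ on both sides of Eq.~\eqref{eq: stability-condition}, using the tower property, to obtain
\begin{align*}
\frac{\mathrm{d}}{\mathrm{d}t}\mathbb{E}(V(\bm{\rho}(t))) \leq K - \epsilon' \mathbb{E}\left(\sum_{(i,o) \in \mathcal{M_{F\cup N}}} z_{(i,o)}(t)\right).
\end{align*}
Here I would implicitly use that the derivative and expectation can be exchanged, which is justified because $V$ and the queue states are bounded by the admissibility/regularity arguments underlying Lemmas~\ref{lemma: upper bound}–\ref{lemma: weights} on real links, and integrability on fictitious source queues is implied by the assumed drift bound itself.

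Next, I would integrate both sides on $[0,T]$ to telescope the left-hand side by the fundamental theorem of calculus:
\begin{align*}
\mathbb{E}(V(\bm{\rho}(T))) - \mathbb{E}(V(\bm{\rho}(0))) \leq K T - \epsilon' \int_0^T \mathbb{E}\left(\sum_{(i,o) \in \mathcal{M_{F\cup N}}} z_{(i,o)}(t)\right)\mathrm{d}t.
\end{align*}
Since $V(\bm{\rho}(t)) \geq 0$ by Definition~\ref{def: lyapunov} (the weights $\tau^d_{i,o}(x,t)$ are non-negative by Lemma~\ref{lemma: weights}), the left-hand side is bounded below by $-\mathbb{E}(V(\bm{\rho}(0)))$. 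Assuming a finite initial condition so that $\mathbb{E}(V(\bm{\rho}(0)))<\infty$, rearranging yields
\begin{align*}
\frac{1}{T}\int_0^T \mathbb{E}\left(\sum_{(i,o) \in \mathcal{M_{F\cup N}}} z_{(i,o)}(t)\right)\mathrm{d}t \leq \frac{K}{\epsilon'} + \frac{\mathbb{E}(V(\bm{\rho}(0)))}{\epsilon' T}.
\end{align*}

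Finally, I would take $\limsup_{T\to\infty}$ on both sides. The second term on the right vanishes, leaving the uniform bound $K/\epsilon'<\infty$, which is exactly Eq.~\eqref{eq: stability-definition} in Definition~\ref{def: stability}, so the network is strongly stable. The main subtlety, rather than any heavy calculation, lies in the differentiation-under-expectation step and in guaranteeing $\mathbb{E}(V(\bm{\rho}(0)))<\infty$; both are standard conditions in Lyapunov-drift proofs for traffic networks (cf.\ \citet{li2019position, tan2025cvmp}) and can be stated as mild regularity assumptions on the initial state. Once these are in place, the rest of the argument is a direct application of non-negativity of $V$ and integration of the drift inequality.
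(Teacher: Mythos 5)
Your proposal is correct and follows essentially the same route as the paper, which proves the lemma exactly by integrating the drift inequality \eqref{eq: stability-condition} over $[0,T]$ in expectation, using the non-negativity of $V$ and a finite initial value, and re-ordering terms (deferring details to \citet{varaiya2013max, li2019position, tan2025cvmp}). Your additional remarks on exchanging differentiation with expectation and on $\mathbb{E}(V(\bm{\rho}(0)))<\infty$ are the same mild regularity assumptions implicit in those references.
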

The detailed proof of Lemma \ref{lemma: sufficient condition} can be found in \cite{varaiya2013max, li2019position, tan2025cvmp}, which can be easily obtained by integrating both sides of Eq.~\eqref{eq: stability-condition} over period $[0, T]$ with expectation and re-ordering the terms.

\subsubsection{Stability of Transit-MP} \label{sssec: stability of transit-mp}

In this section, we first demonstrate that the proposed Transit-MP, which prioritizes high-occupancy transit vehicles with consideration of the impact of transit stations, can stabilize the network queues in partially CV environments. Note that, though both \cite{li2019position} and \cite{tan2025cvmp} also considered continuous traffic flow dynamics for stability proof, this study differentiates from them as Transit-MP uses different spatial-temporal varying weights, i.e., $\tau^u_{i,o}$ and $\tau^d_{i,o}$, on upstream and downstream traffic state, respectively. 
\begin{theorem}[Stability of Transit-MP in partially CV environments] \label{thm: stability of transit-MP}
    Given the admissible demand region $\bm{\Lambda}$ in Definition \ref{def: admissible}, if the exogenous demand $\bm{a} \in \bm{\Lambda}^{int}$, the proposed Transit-MP presented in Eq.~\eqref{eq: transit MP} (or Eq.~\eqref{eq: Transit-MP, density} in density form) can strongly stabilize the traffic network queues in partially CV environments. 
\end{theorem}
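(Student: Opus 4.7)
The strategy is to verify the Lyapunov drift condition of Lemma~\ref{lemma: sufficient condition}: produce positive constants $K$ and $\epsilon'$ with $\mathbb{E}^{\bm{\rho}(t)}(dV/dt) \le K - \epsilon' \mathbb{E}\bigl(\sum_{(i,o)\in\mathcal{M}_{\mathcal{F}\cup\mathcal{N}}} z_{(i,o)}(t)\bigr)$, after which strong stability in the sense of Definition~\ref{def: stability} follows at once. The broad template mirrors \cite{li2019position} and \cite{tan2025cvmp}, but must be adapted to Transit-MP's use of \emph{different} upstream and downstream weights inside the pressure.

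First I would differentiate the Lyapunov function in Eq.~\eqref{eq: lyapunov}. On fictitious links the point-queue dynamics give $\sum_{(i,o)\in\mathcal{M}_{\mathcal{F}}}\rho_{i,o}(a_{i,o}-q^{\text{out}}_{i,o})$. On real links, applying the product rule to the double integral produces a ``$\partial_t\tau^d$'' term (bounded using Lemmas~\ref{lemma: upper bound} and~\ref{lemma: weights}, and absorbed into a constant $K_1$) plus a ``$\partial_t\rho$'' term. I would substitute the continuity equation $\partial_t\rho_{i,o}+\partial_x q_{i,o}=0$, then integrate by parts in $x$. The inlet boundary term vanishes thanks to $\tau^d_{i,o}(0,t)=0$ from Lemma~\ref{lemma: weights}; the stopline boundary term expresses the outflow as $q_{i,o}(L_i,t)=s_{i,o}c_{i,o}$, and re-routing this flux through the turning ratios $r_{o,k}$ casts the drift into the familiar Max-Pressure form, with each active movement contributing $-s_{i,o}c_{i,o}\bigl(\int_0^{L_i}\tau^d_{i,o}\rho_{i,o}\,dx-\sum_{(o,k)}r_{o,k}\int_0^{L_o}\tau^d_{o,k}\rho_{o,k}\,dx\bigr)$, modulo bounded interior residuals controlled by Lemma~\ref{lemma: upper bound}.

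The \emph{main obstacle}, and the step that distinguishes this proof from \cite{li2019position} and \cite{tan2025cvmp}, is reconciling the above expression---whose upstream and downstream weights are both $\tau^d$---with the Transit-MP decision in Eq.~\eqref{eq: Transit-MP, density}, which weights the upstream by the occupancy-scaled $\tau^u p_v$ on CVs only. My plan is to take the conditional expectation with respect to the random CV assignment, which converts each $\int\tau^d\rho^{cv}\,dx$ into a constant multiple of $\int\tau^d\rho\,dx$ via the penetration rate, and then to exploit two facts: (i) $p_v\ge 1$ for every vehicle, so the occupancy scaling only amplifies the upstream pressure relative to the unweighted version; and (ii) by the correction in Eq.~\eqref{eq: c= 0 condition} a movement is activated only when the unweighted difference $w^{u,cv}_{i,o}-\sum r_{o,k}w^{d,cv}_{o,k}$ is non-negative, guaranteeing Eq.~\eqref{eq: positive pressure} and thereby pinning the inequality in the right direction. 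With both in hand, the drift contribution under $\bm{s}^*$ can be upper-bounded, up to an additive constant folded into $K$, by the same Max-Pressure expression evaluated at any $\bar{\bm{s}}\in\mathcal{S}^{co}$.

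Finally I would invoke the admissibility hypothesis $\bm{a}\in\bm{\Lambda}^{\text{int}}$: by Eq.~\eqref{eq: admissible 1} there exist $\bar{\bm{s}}\in\mathcal{S}^{co}$ and $\epsilon>0$ with $\bm{a}-(\bm{I}-\bm{r})\bm{c}\bar{\bm{s}}\preceq-\epsilon\bm{1}$. Substituting this $\bar{\bm{s}}$ collapses the policy term and the exogenous-demand term into a net negative contribution proportional to $-\epsilon$ times the total queue. Absorbing every bounded residual produced by Lemmas~\ref{lemma: upper bound}--\ref{lemma: weights} (the $\partial_t\tau^d$ term, finite real-link capacities $L_i\rho^{\max}_{i,o}$, and flux bounds at stoplines) into $K$, and choosing $\epsilon'>0$ proportional to $\epsilon$ and to the CV penetration rate, gives $\mathbb{E}^{\bm{\rho}(t)}(dV/dt)\le K-\epsilon'\mathbb{E}\sum z_{(i,o)}(t)$. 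Lemma~\ref{lemma: sufficient condition} then delivers the strong stability asserted in Theorem~\ref{thm: stability of transit-MP}.
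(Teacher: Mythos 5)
Your proposal follows essentially the same route as the paper's proof: the identical Lyapunov-drift strategy via Lemma~\ref{lemma: sufficient condition}, the same decomposition of $\mathrm{d}V/\mathrm{d}t$ using the continuity equation, integration by parts and $\tau^d_{i,o}(0,t)=0$, the Bernoulli penetration-rate conversion between CV-weighted and full-traffic states, and the same resolution of the key upstream/downstream weight mismatch by exploiting $p_v\ge 1$ (so $w^{u,cv}_{i,o}\ge w^{d,cv}_{i,o}$) and absorbing the bounded discrepancy term into the constant $K$, before invoking $\bm{a}\in\bm{\Lambda}^{int}$ to extract the negative drift. The only points you gloss are handled by the paper in the way you indicate: the stopline outflow is $\min\{c_{i,o}s_{i,o},\mu_{i,o}\}$ rather than exactly $s_{i,o}c_{i,o}$, with the resulting gap bounded separately (the paper's $\eta_2$, which needs a boundedness assumption on fictitious-link queues when supply-limited), and the weighted state $\bm{w}^d$ must finally be lower-bounded against the vehicle count $\bm{z}$ to match Definition~\ref{def: stability}.
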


\begin{proof}
    For brevity, the partial differentiation in terms of $t$ (or $x$) is denoted by $\partial_t$ (or $\partial_x$). If not specified, all variables indicate movement $(i,o)$, thus, the subscript is omitted. 

    \textbf{First, we simplify the Lyapunov drift.}
    According to Lemma \ref{lemma: sufficient condition}, it is equivalent to proving that Eq.~\eqref{eq: stability-condition} holds for all $t \geq 0$ under the control of Transit-MP. Following the idea of \citet{li2019position} and \citet{tan2025cvmp}, we first decompose and simplify $\mathbb{E}^{\bm{\rho}(t)} \left(\frac{\mathrm{d} V(\bm{\rho}(t))}{\mathrm{d} t} \right)$ based on the Leibniz integral rule and the product rule of partial differentiation as below, 
    \begin{align}
        \mathbb{E}^{\bm{\rho}(t)} \left(\frac{\mathrm{d} V(\bm{\rho}(t))}{\mathrm{d} t}\right) = & \underbrace{\sum_{\mathcal{M}_{\mathcal{F}}}\mathbb{E}^{\bm{\rho}(t)} \left( \rho(t)\frac{\mathrm{d} \rho(t)}{\mathrm{d} t} \right)}_{\delta_1} \nonumber \\
        & + \underbrace{ \sum_{\mathcal{M}_{\mathcal{N}}}\mathbb{E}^{\bm{\rho}(t)} \left( \int_0^{L_i}\int_0^{L_i} [\partial_t \tau^d(x,t)] \rho(x,t) \rho(x',t) \mathrm{d} x' \mathrm{d}x \right)}_{\delta_2} \nonumber \\
        & + \underbrace{ \sum_{\mathcal{M}_{\mathcal{N}}}\mathbb{E}^{\bm{\rho}(t)} \left(  \int_0^{L_i}\int_0^{L_i} \left(\tau^d(x,t)+\tau^d(x',t)\right) \rho(x,t) [\partial_t\rho(x',t)] \mathrm{d} x' \mathrm{d}x \right)}_{\delta_3} \label{eq: theorem 1 proof_1}
    \end{align}
    where $x'$ and $x$ are equivalent in status and can therefore be interchanged to cancel out 1/2. Considering continuous traffic flow dynamics \citep{li2019position}, we have
    \begin{align} 
        &\frac{\mathrm{d}\rho_{i,o}(t)}{\mathrm{d}t} = 
        a_{i,o}(t)-\min\{c_{i,o} s_{i,o}(t), \mu_{i,o}(t)\}, \quad \text{for} \quad (i,o) \in \mathcal{M_F}, \label{eq:dynamics-1} \\
        &\partial_t \rho_{i,o}(x,t) = - \partial_x q_{i,o}(x,t) \quad \text{for} \quad x\in(0,L_i), \quad (i,o) \in \mathcal{M_N}, \label{eq:dynamics-2}
    \end{align} 
where $a_{i,o}(t)$ denotes the exogenous arrival demand rate; $q_{i,o}(x,t)$ denotes the flow rate; $\mu_{i,o}(t)$ denotes the the effective serviceable demand rate that depends on both the egress density of the upstream link $i$ and the ingress density of the downstream link $o$, thus respecting both upstream vehicle availability and downstream storage constraints. 
At the boundary of real links, i.e., when $x=0$ and $x=L_i$, we have
\begin{align}
    &\partial_t \rho_{i,o}(0,t) = - \partial_x q_{i,o}(0,t) = \sum_{(\forall h,i)\in \mathcal{M}_{n''}} r_{i,o} \min\{ c_{h,i} s_{h,i}(t), \mu_{h,i}(t) \} - q_{i,o}(0,t),\label{eq:dynamics-3}\\ 
    &\partial_t \rho_{i,o}(L_i,t) = - \partial_x q_{i,o}(L_i,t) = q_{i,o}(L_i,t) - \min\{c_{i,o} s_{i,o}(t), \mu_{i,o}(t)\} \label{eq:dynamics-4}
\end{align}
for $(i,o) \in \mathcal{M_N}$, where $n''$ denotes the upstream node. 

Then, substituting Eq.~\eqref{eq:dynamics-1} into $\delta_1$, we have 
\begin{align}
    \delta_1 = &\sum_{\mathcal{M}_{\mathcal{F}}}\mathbb{E}^{\bm{\rho}(t)} \left( (a(t)-\min\{c s(t), \mu(t)\}) \rho(t) \right)
\end{align}
Based on Lemma \ref{lemma: upper bound} and Lemma \ref{lemma: weights}, we have 
$$[\partial_t \tau^d(x,t)] \rho(x,t) \rho(x',t) \leq |\partial_t \tau^d(x,t)| \rho(x,t) \rho(x',t) \leq \dot{\tau}^{d,max}\rho(x,t) \rho(x',t).$$ Thus, we have 
\begin{align}
    \delta_2 
    \leq & \sum_{\mathcal{M}_{\mathcal{N}}}\mathbb{E}^{\bm{\rho}(t)} \left( \dot{\tau}^{d,max} \int_0^{L_i}\int_0^{L_i} \rho(x,t) \rho(x',t) \mathrm{d} x' \mathrm{d}x \right) \nonumber \\
    \leq & \sum_{\mathcal{M}_{\mathcal{N}}}\dot{\tau}^{d,max}(L_i \rho^{max})^2 \triangleq K_2
\end{align}

Replacing $\partial_t\rho(x',t)$ based on Eq.~\eqref{eq:dynamics-3} and~\eqref{eq:dynamics-4}, we have 
\begin{align}
    \delta_3 = & \underbrace{\sum_{\mathcal{M}_{\mathcal{N}}} \mathbb{E}^{\bm{\rho}(t)} \left( \int_0^{L_i}\int_{0_+}^{L_{i-}} \left(\tau^d(x,t)+\tau^d(x',t)\right) \rho(x,t) [-\partial_x q(x',t)] \mathrm{d} x' \mathrm{d}x \right)}_{\delta_{3,1}} \nonumber \\
    & +\underbrace{\sum_{\mathcal{M}_{\mathcal{N}}} \mathbb{E}^{\bm{\rho}(t)} \left( [-\partial_x q(0,t)] \int_0^{L_i} \left(\tau^d(x,t)+\tau^d(0,t)\right) \rho(x,t) \mathrm{d}x \right)}_{\delta_{3,2}} \nonumber \\
    & +\underbrace{\sum_{\mathcal{M}_{\mathcal{N}}} \mathbb{E}^{\bm{\rho}(t)} \left( [-\partial_x q(L_i,t)] \int_0^{L_i} \left(\tau^d(x,t)+\tau^d(L_i,t)\right) \rho(x,t) \mathrm{d}x \right)}_{\delta_{3,3}}
\end{align}
where 
\begin{align}
    \delta_{3,1} 
     = & \sum_{\mathcal{M}_{\mathcal{N}}} \mathbb{E}^{\bm{\rho}(t)} \left( \int_0^{L_i} \tau^d(x,t) \rho(x,t) \left[\int_{0_+}^{L_{i-}}-\partial_x q(x',t)\mathrm{d} x'\right] \mathrm{d}x \right) \nonumber \\
     & + \sum_{\mathcal{M}_{\mathcal{N}}} \mathbb{E}^{\bm{\rho}(t)} \left( \int_0^{L_i} \rho(x,t) \left[\int_{0_+}^{L_{i-}}-\tau^d(x',t) \partial_x q(x',t)\mathrm{d} x'\right]  \mathrm{d}x \right) \nonumber \\
     \leq & \sum_{\mathcal{M}_{\mathcal{N}}} \mathbb{E}^{\bm{\rho}(t)} \left(q^{max} \int_0^{L_i} \tau^d(x,t) \rho(x,t) \mathrm{d}x \right) + \frac{1}{2}\sum_{\mathcal{M}_{\mathcal{N}}} \mathbb{E}^{\bm{\rho}(t)} \left( \tau^{d,max} L_i \dot{q}^{max} \int_0^{L_i} \rho(x,t) \mathrm{d}x \right) \nonumber \\
     \leq & \sum_{\mathcal{M}_{\mathcal{N}}} (q^{max} \tau^{d,max} L_i \rho^{max} + \tau^{d,max} L_i \dot{q}^{max} L_i \rho^{max}) \triangleq K_{3,1}
\end{align}
based on Lemma \ref{lemma: upper bound} and Lemma \ref{lemma: weights};
\begin{align}
    \delta_{3,2} =  & \sum_{\mathcal{M}_{\mathcal{N}}}\mathbb{E}^{\bm{\rho}(t)} \left(  [\sum_{(\forall h,i)\in \mathcal{M}_{n''}} r \min\{ c_{h,i} s_{h,i}(t), \mu_{h,i}(t) \} ] \int_0^{L_i} \left(\tau^d(x,t)+ \underbrace{\tau^d(0,t)}_{=0} \right) \rho(x,t) \mathrm{d}x \right) \nonumber \\
    & \underbrace{ - \sum_{\mathcal{M}_{\mathcal{N}}} \mathbb{E}^{\bm{\rho}(t)} \left( q(0,t) \int_0^{L_i} \left(\tau^d(x,t)+\tau^d(0,t)\right) \rho(x,t) \mathrm{d}x \right)}_{\leq 0} \nonumber \\
    \leq & \sum_{\mathcal{M}_{\mathcal{N}}}\mathbb{E}^{\bm{\rho}(t)} \left(  [\sum_{(\forall h,i)\in \mathcal{M}_{n''}} r \min\{ c_{h,i} s_{h,i}(t), \mu_{h,i}(t) \} ] \int_0^{L_i} \tau^d(x,t) \rho(x,t) \mathrm{d}x \right)
\end{align}
based on Lemma \ref{lemma: weights}; and 
\begin{align}
    \delta_{3,3} = &\sum_{\mathcal{M}_{\mathcal{N}}} \mathbb{E}^{\bm{\rho}(t)} \left( q_{i,o}(L_i,t) \int_0^{L_i} \left(\tau^d(x,t)+\tau^d(L_i,t)\right) \rho(x,t) \mathrm{d}x \right) \nonumber \\
    & - \sum_{\mathcal{M}_{\mathcal{N}}} \mathbb{E}^{\bm{\rho}(t)} \left( \min\{c s(t), \mu(t)\} \int_0^{L_i} \tau^d(x,t) \rho(x,t) \mathrm{d}x \right) \nonumber \\
    & \underbrace{- \sum_{\mathcal{M}_{\mathcal{N}}} \mathbb{E}^{\bm{\rho}(t)} \left( \min\{c s(t), \mu(t)\} \int_0^{L_i} \tau^d(L_i,t) \rho(x,t) \mathrm{d}x \right)}_{\leq 0} \nonumber \\
    \leq & \underbrace{\sum_{\mathcal{M}_{\mathcal{N}}} \mathbb{E}^{\bm{\rho}(t)} (2 q^{max}\tau^{d,max}L_i \rho^{max})}_{K_{3,3}} - \sum_{\mathcal{M}_{\mathcal{N}}} \mathbb{E}^{\bm{\rho}(t)} \left( \min\{c s(t), \mu(t)\} \int_0^{L_i} \tau^d(x,t) \rho(x,t) \mathrm{d}x \right).
\end{align}
As such, we have 
\begin{align}
    \delta_3 \leq & K_3 + \sum_{\mathcal{M}_{\mathcal{N}}}\mathbb{E}^{\bm{\rho}(t)} \left(  [\sum_{(\forall h,i)\in \mathcal{M}_{n''}} r \min\{ c_{h,i} s_{h,i}(t), \mu_{h,i}(t) \} ] \int_0^{L_i} \tau^d(x,t) \rho(x,t) \mathrm{d}x \right) \nonumber \\
    & - \sum_{\mathcal{M}_{\mathcal{N}}} \mathbb{E}^{\bm{\rho}(t)} \left( \min\{c s(t), \mu(t)\} \int_0^{L_i} \tau^d(x,t) \rho(x,t) \mathrm{d}x \right),
\end{align}
where $K_3 = K_{3,1} + K_{3,3}$. 

Then, Eq.~\eqref{eq: theorem 1 proof_1} is bounded by 
\begin{align}
    \mathbb{E}^{\bm{\rho}(t)} \left(\frac{\mathrm{d} V(\bm{\rho}(t))}{\mathrm{d} t}\right) \leq & K_2 + K_3 + \sum_{\mathcal{M}_{\mathcal{F}}}\mathbb{E}^{\bm{\rho}(t)} \left( (a(t)-\min\{c s(t), \mu(t)\}) \rho(t) \right) \nonumber \\
    & + \sum_{\mathcal{M}_{\mathcal{N}}}\mathbb{E}^{\bm{\rho}(t)} \left(  [\sum_{(\forall h,i)\in \mathcal{M}_{n''}} r \min\{ c_{h,i} s_{h,i}(t), \mu_{h,i}(t) \} ] \int_0^{L_i} \tau^d(x,t) \rho(x,t) \mathrm{d}x \right) \nonumber \\
    & - \sum_{\mathcal{M}_{\mathcal{N}}} \mathbb{E}^{\bm{\rho}(t)} \left( \min\{c s(t), \mu(t)\} \int_0^{L_i} \tau^d(x,t) \rho(x,t) \mathrm{d}x \right) \nonumber \\
    & \triangleq K_2 + K_3 + \eta
\end{align}

For brevity, we let $w^d(t) = \int_0^{L_i} \tau^d(x,t)\rho(x,t) \mathrm{d}x$ for downstream movements in $\mathcal{M_N}$. Similarly we also have $w^u(t) = \int_0^{L_i} \tau^u(x,t)\rho(x,t) \mathrm{d}x$ for upstream movements in later derivation. For movements in $\mathcal{M_F}$, we have $w^u(t) = w^d(t) = \rho(t)$. In vehicular form, we have
\begin{align}
    &w^d(t) = \int_0^{L_i} \tau^d(x,t)\rho(x,t) \mathrm{d}x \equiv \sum_{v \in \mathcal{V}} \beta_v \tau_v \nonumber \\
    &w^u(t) = \int_0^{L_i} \tau^u(x,t)\rho(x,t) \mathrm{d}x  \equiv \sum_{v \in \mathcal{V}} \beta_v p_v \tau_v,
\end{align}
which are essentially the traffic states for the pressure calculation of Transit-MP in fully CV environments. 

Then, in matrix form, we have
\begin{align}
    \eta =& \sum_{\mathcal{M}_{\mathcal{F}}}\mathbb{E}^{\bm{\rho}(t)} \left( (a(t)-\min\{c s(t), \mu(t)\}) \rho(t) \right) \nonumber \\
    & + \sum_{\mathcal{M}_{\mathcal{N}}}\mathbb{E}^{\bm{\rho}(t)} \left(  [\sum_{(\forall h,i)\in \mathcal{M}_{n''}} r \min\{ c_{h,i} s_{h,i}(t), \mu_{h,i}(t) \} ] \int_0^{L_i} \tau^d(x,t) \rho(x,t) \mathrm{d}x \right) \nonumber \\
    & - \sum_{\mathcal{M}_{\mathcal{N}}} \mathbb{E}^{\bm{\rho}(t)} \left( \min\{c s(t), \mu(t)\} \int_0^{L_i} \tau^d(x,t) \rho(x,t) \mathrm{d}x \right) \nonumber \\
    =& \mathbb{E}^{\bm{\rho}(t)} (\{\bm{w}^d\}^\top(\bm{a} - (\bm{I}-\bm{r})\min\{\bm{c}\bm{s}^*,\bm{\mu}\})),
\end{align}
where $\bm{w}^d \in \mathbb{R}^{|\mathcal{M_{N\cup F}}| \times 1}$ is the column vector of $w^{d}$ for all movements. $\bm{s}^*$ is the signal decision by Transit-MP.
Here we further handle the term $\min\{\bm{c}\bm{s}^*,\bm{\mu}\}$ by adding and subtracting the term $\bm{c}\bm{s}^*$ as below,
\begin{align}
    \eta = \underbrace{\mathbb{E}^{\bm{\rho}(t)}(\{\bm{w}^d\}^\top(\bm{a} - (\bm{I}-\bm{r})\bm{c}\bm{s}^*))}_{\eta_1} + \underbrace{\mathbb{E}^{\bm{\rho}(t)}(\{\bm{w}^d\}^\top(\bm{I}-\bm{r}) (\bm{c}\bm{s}^* - \min\{\bm{c}\bm{s}^*,\bm{\mu}\}))}_{\eta_2}.
\end{align}

\paragraph{Here we handle $\eta_2$.}
Obviously, we have $\bm{0} \preceq \bm{c}\bm{s}^* - \min\{\bm{c}\bm{s}^*,\bm{\mu}\} \preceq \bm{c}\bm{s}^*$ and $\bm{s}^* \preceq 1$. 
Recall that $w_d$ is unbounded for movement in $\mathcal{M_F}$. We assume that when $\mu(t) \leq c$, $\rho(t) \leq \rho^{\mu}$ for movements in $\mathcal{M_F}$. Then, we have
\begin{align}
    w^d(cs(t) - \min\{c s(t), \mu(t)\})
    \begin{cases}
        \leq (cs(t) - \min\{c s(t), \mu(t)\})\rho^{\mu} \leq c \rho^{\mu} \quad \text{if} \quad \mu(t) \leq c \\
         = (cs(t) - cs(t))\rho^{\mu} = 0 \quad \text{if} \quad \mu(t) > c
    \end{cases}
\end{align}
for movements in $\mathcal{M_F}$. 
Then we have, 
\begin{align}
    \eta_2 & \leq \mathbb{E}^{\bm{\rho}(t)}(\{\bm{w}^d\}^\top \bm{I} (\bm{c}\bm{s}^* - \min\{\bm{c}\bm{s}^*,\bm{\mu}\})) \nonumber \\
    &= \mathbb{E}^{\bm{\rho}(t)}(\{\bm{w}^d\}^\top (\bm{c}\bm{s}^* - \min\{\bm{c}\bm{s}^*,\bm{\mu}\})_{\mathcal{M_F}} + \mathbb{E}^{\bm{\rho}(t)}(\{\bm{w}^d\}^\top (\bm{c}\bm{s}^* - \min\{\bm{c}\bm{s}^*,\bm{\mu}\})_{\mathcal{M_N}}\nonumber \\
    & \leq \sum_{\mathcal{M}_{\mathcal{F}}}\mathbb{E}^{\bm{\rho}(t)} (c\rho^{\mu}) + \mathbb{E}^{\bm{\rho}(t)}(\{\bm{w}^d\}^\top \bm{c}\bm{s}^*)_{\mathcal{M_N}} \leq \sum_{\mathcal{M}_{\mathcal{F}}}\mathbb{E}^{\bm{\rho}(t)} (c\rho^{\mu}) + \mathbb{E}^{\bm{\rho}(t)}(\{\bm{w}^d\}^\top \bm{c})_{\mathcal{M_N}} \nonumber \\
    &= \sum_{\mathcal{M}_{\mathcal{F}}}\mathbb{E}^{\bm{\rho}(t)} (c\rho^{\mu}) + \sum_{\mathcal{M}_{\mathcal{N}}}\mathbb{E}^{\bm{\rho}(t)} (c\int_0^{L_i} \tau^d(x,t) \rho(x,t) \mathrm{d}x) \nonumber \\
    & \leq \sum_{\mathcal{M}_{\mathcal{F}}}\mathbb{E}^{\bm{\rho}(t)} (c\rho^{\mu}) + \sum_{\mathcal{M}_{\mathcal{N}}} c \tau^{d,max} L_i \rho^{max} \triangleq K_4
\end{align}

\paragraph{Next, we handle $\eta_1$.}
Note that all vectors and matrices in $\eta_1$ contain fictitious movements with the corresponding size becoming $|\mathcal{M_{N \cup F}}|$. Besides, $\eta_1$ is associated with the signal decision $\bm{s}^*$ by the proposed Transit-MP controller. 
In matrix form, after including movements in $\mathcal{M_F}$, Transit-MP in partially CV environments is written as,
\begin{align}
    \bm{s}^* = \arg\max_{\bm{s}\in \mathcal{S}} (\{\bm{w}^{u,cv}\}^\top - \{\bm{w}^{d,cv}\}^\top \bm{r})\bm{c}\bm{s}. \label{eq: Transit-MP, matrix}
\end{align}
where $\bm{w}^{u,cv}$ and $\bm{w}^{d,cv}$ are the vectors of $w^{u,cv}$ and $w^{d,cv}$ of Transit-MP, respectively. 

Rewriting the pressure calculation of Transit-MP, we have
\begin{align}
    (\{\bm{w}^{u,cv}\}^\top - & \{\bm{w}^{d,cv}\}^\top \bm{r})\bm{c}\bm{s}^* = \max_{\bm{s}\in \mathcal{S}} (\{\bm{w}^{u,cv}\}^\top - \{\bm{w}^{d,cv}\}^\top \bm{r})\bm{c}\bm{s} = \max_{\bm{s}\in \mathcal{S}^{ch}} (\{\bm{w}^{u,cv}\}^\top - \{\bm{w}^{d,cv}\}^\top \bm{r})\bm{c}\bar{\bm{s}} \nonumber \\
    &\geq (\{\bm{w}^{u,cv}\}^\top - \{\bm{w}^{d,cv}\}^\top \bm{r})\bm{c}\bar{\bm{s}} = \{\bm{w}^{d,cv}\}^\top (\bm{I}- \bm{r})\bm{c}\bar{\bm{s}} + (\{\bm{w}^{u,cv}\}^\top - \{\bm{w}^{d,cv}\}^\top)\bm{c}\bar{\bm{s}}, \label{eq: rewrite pressure of transit-mp}
\end{align}

If we assume that the probability of a vehicle being a CV follows a Bernoulli distribution across the network, i.e., $Pr(v \in \mathcal{V}^{cv}_{i,o}) = \pi>0$, then we have
\begin{align}
    \pi \mathbb{E}^{\bm{\rho}(t)}(\bm{w}^d) = \mathbb{E}^{\bm{\rho}(t)}(\bm{w}^{d,cv})
\end{align}
by assuming $\beta_v$ and $\tau_v$ are independent \citep{tan2025cvmp}. 

Then, we have
\begin{align}
    \mathbb{E}^{\bm{\rho}(t)} (\{\bm{w}^{d}\}^\top (\bm{I}- \bm{r})\bm{c}\bm{s}^*) & = \frac{1}{\pi} \mathbb{E}^{\bm{\rho}(t)} (\{\bm{w}^{d,cv}\}^\top (\bm{I}- \bm{r})\bm{c}\bm{s}^*)\nonumber \\
    &= \frac{1}{\pi} \mathbb{E}^{\bm{\rho}(t)}((\{\bm{w}^{u,cv}\}^\top - \{\bm{w}^{d,cv}\}^\top \bm{r})\bm{c}\bm{s}^* - (\{\bm{w}^{u,cv}\}^\top - \{\bm{w}^{d,cv}\}^\top)\bm{c}\bm{s}^*) \nonumber \\
    &\geq \frac{1}{\pi} \mathbb{E}^{\bm{\rho}(t)}(\{\bm{w}^{d,cv}\}^\top (\bm{I}- \bm{r})\bm{c}\bar{\bm{s}} - \underbrace{(\{\bm{w}^{u,cv}\}^\top - \{\bm{w}^{d,cv}\}^\top)\bm{c}(\bm{s}^*-\bar{\bm{s}})}_{\chi_1}).
\end{align}

Back to $\eta_1$, according to Definition \ref{def: admissible}, there exists a positive $\epsilon$ that makes $\bm{a} - (\bm{I} - \bm{r})\bm{c}\bar{\bm{s}} \preceq -\epsilon \bm{1}$, then we have
\begin{align}
     \eta_1 =& \mathbb{E}^{\bm{\rho}(t)}\left(\{\bm{w}^d\}^\top \bm{a} - \{\bm{w}^d\}^\top(\bm{I}-\bm{r}) \bm{c}\bm{s}^*\right) \nonumber \\
     = & \frac{1}{\pi} \mathbb{E}^{\bm{\rho}(t)}\left(\{\bm{w}^{d,cv}\}^\top \bm{a} - \pi\{\bm{w}^d\}^\top(\bm{I}-\bm{r}) \bm{c}\bm{s}^* \right) \nonumber \\
     \leq & \frac{1}{\pi} \mathbb{E}^{\bm{\rho}(t)}\left(\{\bm{w}^{d,cv}\}^\top \bm{a} - \{\bm{w}^{d,cv}\}^\top(\bm{I}-\bm{r}) \bm{c}\bar{\bm{s}} \right) + \frac{1}{\pi} \mathbb{E}^{\bm{\rho}(t)} (\chi_1) \nonumber \\
     = & \frac{1}{\pi} \mathbb{E}^{\bm{\rho}(t)}\left(\{\bm{w}^{d,cv}\}^\top ( \bm{a} - (\bm{I}-\bm{r}) \bm{c}\bar{\bm{s}} )\right) + \frac{1}{\pi} \mathbb{E}^{\bm{\rho}(t)}(\chi_1) \nonumber \\
     \leq & -\frac{1}{\pi} \ \epsilon \ \mathbb{E}^{\bm{\rho}(t)}(\{\bm{w}^{d,cv}\}^\top \bm{1}) + \frac{1}{\pi}\mathbb{E}^{\bm{\rho}(t)}(\chi_1) \nonumber \\
     \leq & -\epsilon \ \mathbb{E}^{\bm{\rho}(t)}(\{\bm{w}^d\}^\top \bm{1}) + \frac{1}{\pi}\mathbb{E}^{\bm{\rho}(t)}(\chi_1)
\end{align}

Note that, if the same weights are used for upstream and downstream movement states for pressure calculation, like Q-MP \citep{varaiya2013max}, D-MP \citep{liu2022novel}, and CV-MP \citep{tan2025cvmp}, $\chi_1 = 0$. This suggests that our stability proof is more generalized.
Specifically, in our cases, 
\begin{align}
    0\leq w^{u,cv}_{i,o}(t) - w^{d,cv}_{i,o}(t) 
    \begin{cases}
        \leq w^{u,cv}_{i,o}(t) \quad (i,o) \in \mathcal{M_N} \\
        =0 \quad (i,o) \in \mathcal{M_F} 
    \end{cases}
\end{align}

Obviously, for the second term of $\eta_1$ we have 
\begin{align}
    \frac{1}{\pi}\mathbb{E}^{\bm{\rho}(t)}(\chi_1) &\leq \frac{1}{\pi}\mathbb{E}^{\bm{\rho}(t)}((\{\bm{w}^{u,cv}\}^\top - \{\bm{w}^{d,cv}\}^\top)\bm{c}\bm{s}^*) \leq \frac{1}{\pi}\mathbb{E}^{\bm{\rho}(t)}((\{\bm{w}^{u,cv}\}^\top - \{\bm{w}^{d,cv}\}^\top)\bm{c}) \nonumber \\
    &\leq \frac{1}{\pi}\sum_{\mathcal{M}_{\mathcal{N}}}\mathbb{E}^{\bm{\rho}(t)} (c\int_0^{L_i} \tau^u(x,t) \rho^{cv}(x,t) \mathrm{d}x)\leq \frac{1}{\pi}\sum_{\mathcal{M}_{\mathcal{N}}} c \tau^{u,max} L_i \rho^{max} \triangleq K_5. \label{eq: K5}
\end{align}

As for the first term of $\eta_1$, recall that $\bm{w}^d$ is the vector of $w^d_{i,o}$ for all $(i,o) \in \mathcal{M_{N \cup F}}$ and 
    \begin{align}
        w^d_{i,o}(t) = 
        \begin{cases}
            \int_0^{L_i} \tau^d_{i,o}(x,t)\rho_{i,o}(x,t) \mathrm{d}x = \bar{\tau}^d_{i,o}(t) z_{i,o}(t)  \quad (i,o) \in \mathcal{M_N}\\
            \rho_{i,o}(t) = z_{i,o}(t) \quad (i,o) \in \mathcal{M_F}
        \end{cases}.
    \end{align}
Here $\bar{\tau}^d_{i,o}(t)$ is a nonnegative constant that must exist. 
Then, there must exist $\tau^{min} = \min\{1, \{\bar{\tau}^d_{i,o}(t)\}_{(i,o) \in \mathcal{M_N},\bar{\tau}^d_{i,o}(t)>0}\}$ that makes
\begin{align}
    \bm{w}^d \succeq \tau^{min}\bm{z}, 
\end{align}
where $\bm{z}$ is the column vector of $z_{i,o}$. Then we have
\begin{align}
    \eta_1 \leq -\epsilon \tau^{min} \mathbb{E}^{\bm{\rho}(t)}(\bm{z}^\top \bm{1}) + K_5 = -\epsilon \tau^{min} \mathbb{E}^{\bm{\rho}(t)} \left(\sum_{(i,o) \in \mathcal{M_{F\cup N}}} z_{(i,o)}(t) \right) + K_5.
\end{align}

In summary, we have
\begin{align}
    \mathbb{E}^{\bm{\rho}(t)} \left(\frac{\mathrm{d} V(\bm{\rho}(t))}{\mathrm{d} t}\right) &= \delta_1 + \delta_2 + \delta_3 \leq K_2 + K_3 + \eta_1 + \eta_2 \nonumber \\
    &\leq K_2 + K_3 +K_4 +K_5 - \epsilon \tau^{min} \mathbb{E}^{\bm{\rho}(t)} \left(\sum_{(i,o) \in \mathcal{M_{F\cup N}}} z_{(i,o)}(t) \right) \nonumber \\
    & \triangleq K - \epsilon'\mathbb{E}^{\bm{\rho}(t)} \left(\sum_{(i,o) \in \mathcal{M_{F\cup N}}} z_{(i,o)}(t) \right),
\end{align}
which proves the stability of the network controlled by Transit-MP in partially CV environments based on Lemma \ref{lemma: sufficient condition}.
\end{proof}

Note that, in Theorem \ref{thm: stability of transit-MP}, the partially CV environments are restricted to scenarios where CV is uniformly distributed, i.e., the penetration rate across the road network is assumed to be identical, which maintained the admissible demand region in Definition \ref{def: admissible}. As demonstrated in \cite{tan2025cvmp}, if CVs are heterogeneously distributed across the network, the admissible demand region of CV-based MP controllers will be reduced. Similar conclusion applies to Transit-MP, as shown in Appendix \ref{appendix: heterogeneously}. 

In summary, we demonstrate that Transit-MP, which uses different weights on upstream and downstream movement states for pressure calculation and prioritizes the high-occupancy vehicles while considering the impact of transit stations, can stabilize the road network queue in partially CV environments. 

\section{Modified Transit-MP for sparse CV environments}
Theorem \ref{thm: necessary condition} indicates that CV-MP may fail to stabilize the network queues in the case when no CVs are observed in the movements of a phase until spillovers. In fact, the theorem applies to all MP controllers based only on real-time CV data, including the proposed Transit-MP. In this section, we propose a modified Transit-MP, denoted by mTransit-MP, to address this issue. For those movement where real-time CV data is not available, we will incorporate historical CV data to provide pressure estimates to modify Transit-MP, thus avoiding the unfavorable situation in Theorem \ref{thm: necessary condition}. 

\subsection{Estimated movement travel time}
When there are no CV observations, the expected arrival rate is first estimated by historical CV data, which has been extensively studied by existing research \citep{zheng2017estimating, tang2020tensor, tan2025robust}. We will therefore not dive into CV-based arrival rate estimates in this study. 
Since the CV data is sparse, the estimated arrival rate is essentially an average value over a time-of-day period for collecting multiple-day CV data. Considering the time-varying nature of traffic flow throughout the day, we can estimate an arrival rate for each period, e.g., 15 min or longer, that depends on the available CV data, to accommodate traffic flow variations.
Besides, in this study, we assume that only CV data is available for MP control with TSP. In reality, if there are fixed-location detectors such as loop detectors deployed on the link, the expected arrival rate can be directly obtained by detector data. 

With the estimated arrival rate, the corresponding CV penetration rate $\hat{\psi}_{i,o}$ can be further obtained
\begin{align}
    \hat{\psi}_{i,o} = \frac{|\mathcal{V}^{hcv}_{i,o}|}{\hat{\lambda}_{i,o} T_{tod}}
\end{align}
where $T_{tod}$ denotes the time-of-day period. In some studies \citep{wong2019estimation, jia2023uncertainty}, they may use CV data to estimate the penetration rate first, and then estimate the arrival rate or traffic volume based on the penetration rate, which applies in the following steps of mTransit-MP as well. 

Besides, existing studies have demonstrated that, with even a single CV observed in the queue, the real-time queue length can be estimated \citep{tan2019cycle, li2017real}. Combined with upstream signal information, such a queue length estimate can be more accurate \citep{yang2018queue}. Since this is not the focus of this paper, we assume that for those decision steps with CV observation, the queue length is estimated based on CVs. 

Given the expected arrival rate $\hat{\lambda}_{i,o}$ and the corresponding penetration rate $\hat{\psi}_{i,o}$, we can have an estimate of the movement travel time even though no CVs are observed on the link. Based on the incremental queue accumulation (IQA) model \citep{strong2006new}, the real-time queue length at the stopline is calculated as 
\begin{align}
    Q_{i,o}(t+1) = Q_{i,o}(t) + A_{i,o}(t) - D_{i,o}(t), \label{eq: real-time queue}
\end{align}
where $Q_{i,o}$ denotes the accumulated vehicle at the stopline neglecting the physical length, $A_{i,o}$ denotes the arrived vehicle, and $D_{i,o}$ denotes the departed vehicle. $A_{i,o}(t)$ is determined by the real-time vehicle arrivals and $D_{i,o}(t)$ depends on the signal state. 

Recall that $T_0$ denotes the decision step length of the MP controller, then we have
\begin{align}
    Q_{i,o}(t) = Q_{i,o}(t-T_0) + \int_{t-T_0}^{t} A_{i,o}(t) \mathrm{d}t - \int_{t-T_0}^{t} D_{i,o}(t)\mathrm{d}t,
\end{align}

In expectation, we have 
\begin{align}
    \mathbb{E}(Q_{i,o}(t)) = \max\{0, \mathbb{E}(Q_{i,o}(t-T_0)) + \hat{\lambda}_{i,o} T_0 - s_{i,o}(t-T_0)\lambda_{i,o}^{depart} T_0\}. \label{eq: expected Q}
\end{align}
where $\lambda_{i,o}^{depart}$ denotes the expected departure rate at the stopline. In particular, for those moments with CVs, the queue length estimated by CVs can be used as a substitute for $Q_{i,o}(t-T_0)$, which corrects the queue length estimates to avoid accumulated errors due to the error of $\hat{\lambda}_{i,o}$ and $\lambda_{i,o}^{depart}$.

By using the IQA model, the spatial distribution of vehicles is ignored; thus, only the travel time of accumulated queuing vehicles is considered. For a vehicle $v$, its link travel time is approximated as
\begin{align}
    LTT_v(t) = \overline{ETT}_{i,o} + Delay_v(t),
\end{align}
where $Delay_v$ is the delay of vehicle $v$ by IQA model. Recall that $\overline{ETT}_{i,o}$ is the expected no-delay travel time. Then, the total travel time of the movement, denoted by $TTT_{i,o}$, is calculated as
\begin{align}
    TTT_{i,o}(t) = \sum_{v \in \mathcal{Q}_{i,o}(t)} LTT_v(t) = \overline{ETT}_{i,o} Q_{i,o}(t) + \sum_{v \in \mathcal{Q}_{i,o}(t)} Delay_v(t),
\end{align}
where $\mathcal{Q}_{i,o}$ denotes the set of queued vehicles calculated by the IQA model (corresponding to $Q_{i,o}$).

\begin{figure}[ht]
    \centering
    \includegraphics[width=0.7\textwidth]{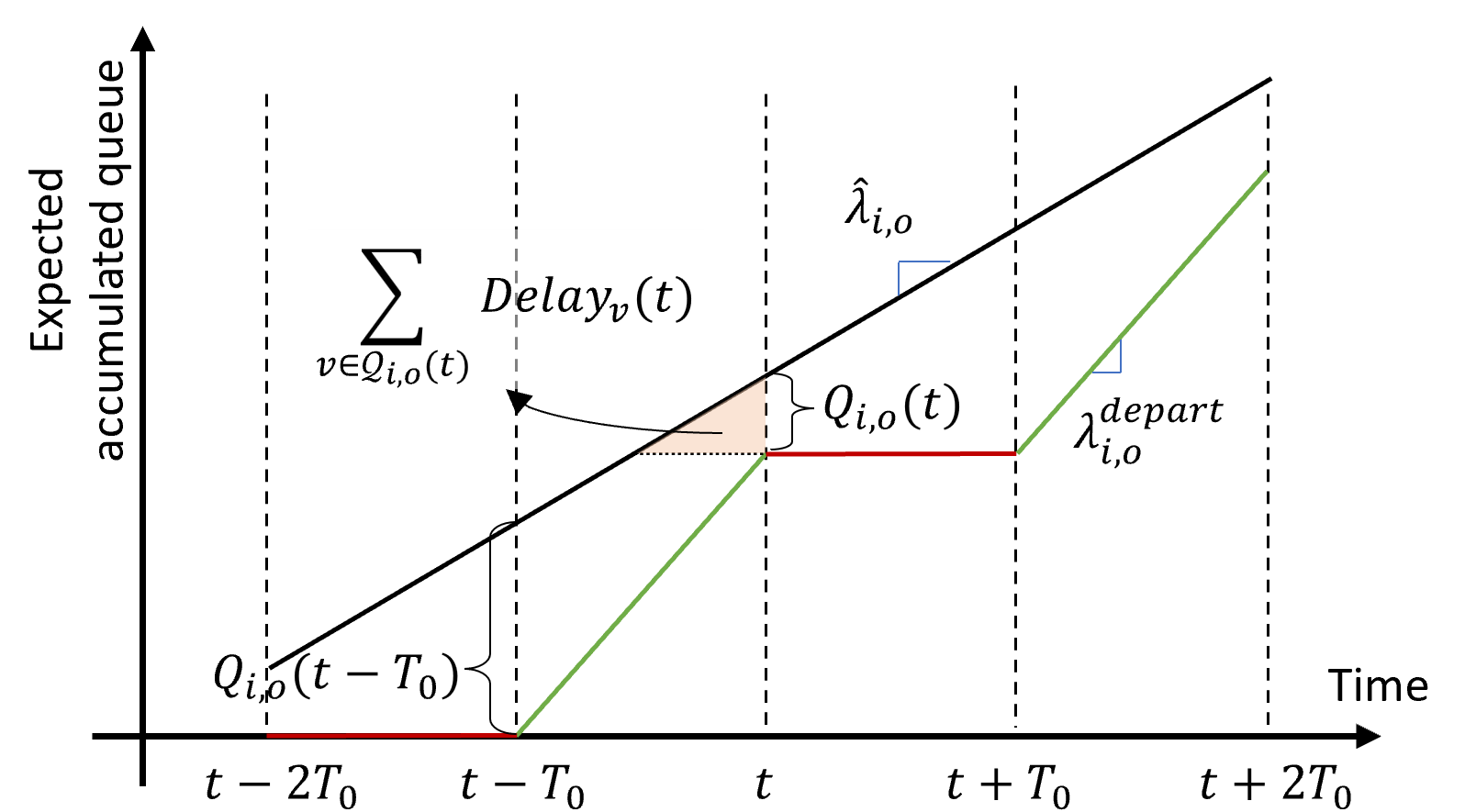}
    \caption{Estimated total delay of vehicles on the link when no CVs are observed.}
    \label{fig: IQA delay}  
\end{figure}

The calculation of $\sum_{v \in \mathcal{Q}_{i,o}(t)} Delay_v(t)$ is illustrated in Fig. \ref{fig: IQA delay}. Note that since we are calculating the total travel time of vehicles on the link, the delay of those departure vehicles should be excluded. Rather than summing the queue length over the period $T_0$, $\sum_{v \in \mathcal{Q}_{i,o}(t)} Delay_v(t)$ is calculated as the triangle area that corresponds to the delay of vehicles in $\mathcal{Q}_{i,o}(t)$. In expectation, we have 
\begin{align}
    \mathbb{E}\left(\sum_{v \in \mathcal{Q}_{i,o}(t)} Delay_v(t)\right) = \frac{\mathbb{E}(Q_{i,o}(t))^2}{2 \hat{\lambda}_{i,o}}.
\end{align}

Normalized by expected no-delay travel time $\overline{ETT}_{i,o}$, the final movement state for pressure calculation, denoted by $\hat{\tau}_{i,o}$ is estimated as
\begin{align}
    \hat{\tau}_{i,o} = \frac{\hat{\psi}_{i,o} \mathbb{E}(TTT_{i,o}(t))}{\overline{ETT}_{i,o}} = \hat{\psi}_{i,o}\mathbb{E}(Q_{i,o}(t)) + \frac{\hat{\psi}_{i,o} \mathbb{E}(Q_{i,o}(t))^2}{2 \hat{\lambda}_{i,o} \overline{ETT}_{i,o}}, \label{eq: estimated TT}
\end{align}
where $\mathbb{E}(Q_{i,o}(t))$ is estimated by Eq.~\eqref{eq: expected Q} at each signal decision step. Recall that $\hat{\psi}_{i,o}$ is the estimated penetration rate of the movement. $\hat{\tau}_{i,o}$ actually represents the normalized link travel time in the context of partially CV environments when no CVs are observed on the movement. By multiplying the penetration rate, it can be ensured that all movement pressures are calculated in the context of partially CV environments.

\subsection{mTransit-MP}
Given the estimated movement state $\hat{\tau}_{i,o}$, the mTransit-MP in sparse CV environments is applied by:
\begin{align}
    \bm{s}^* &= \arg \max_{\bm{s} \in \mathcal{S}} \sum_{n \in \mathcal{N}} \left( \sum_{\forall( i, o) \in \mathcal{M}_{n}} s_{i,o} c_{i,o} \left( \tau_{i,o}^{m,p} - \sum_{ (o,\forall k)\in \mathcal{M}_{n'}} r_{o,k} \tau_{o,k} \right)\right), \label{eq: m-transit MP}
\end{align}
where $\tau_{i,o}^{m,p}$ is the modified upstream travel time incorporating occupancy information, $\tau_{o,k}$ is the downstream travel time, and
\begin{align}
    &\tau_{i,o}^{m,p} = \begin{cases} 
        \hat{p}_{i,o}\hat{\tau}_{i,o} \quad &\text{if} \quad \mathcal{V}_{i,o}^{cv} = \varnothing \\
        \sum_{v \in \mathcal{V}_{i,o}^{cv}} \beta_v p_v \tau_v \quad &\text{else} 
    \end{cases},\label{eq: modified upstream tt}\\
    &\tau_{o,k} = \sum_{v \in \mathcal{V}_{o,k}^{cv}} \beta_v\tau_v. \label{eq: modified downstream tt}
\end{align}
In which $\hat{p}_{i,o}$ is the average occupancy of the movement that can be obtained from historical CV data.
Eq.~\eqref{eq: modified upstream tt} can be interpreted as that if there are no CVs observed, the movement travel time estimated by historical CVs is used as an alternative; otherwise, it uses the real-time CV data. In particular, similar to Transit-MP, the upstream average occupancy information $\hat{p}_{i,o}$, which is estimated using historical data, is incorporated to prioritize high-occupancy movements in mTransit-MP. 

\subsection{Queue starvation immunity of mTransit-MP}
Here we define the phenomenon of queue starvation as below:
\begin{definition} [Queue starvation] \label{def: queue starvation}
    A movement $(i,o) \in \mathcal{M_N}$ is said to experience queue starvation if, there exists a moment $t_0$ with the number of vehicles $z_{i,o}(t_0) > 0$, for any $t \geq t_0$, the signal state $s_{i,o}(t) = 0$. 
\end{definition}
Definition \ref{def: queue starvation} describes a phenomenon where a queue exists for the movement, but the movement can no longer receive a green phase. As shown in Figure \ref{fig: queue starvation}, in sparse CV scenarios, this phenomenon could happen if the movement is spillover without any CV observations, thus violating the sufficient condition of CV observations in Theorem \ref{thm: necessary condition}. 
The proposed Transit-MP, similar to CV-MP, which only uses real-time CV data for pressure calculation, may also experience queue starvation in sparse CV environments. 

\begin{figure}[ht]
    \centering
    \includegraphics[width=0.9\textwidth]{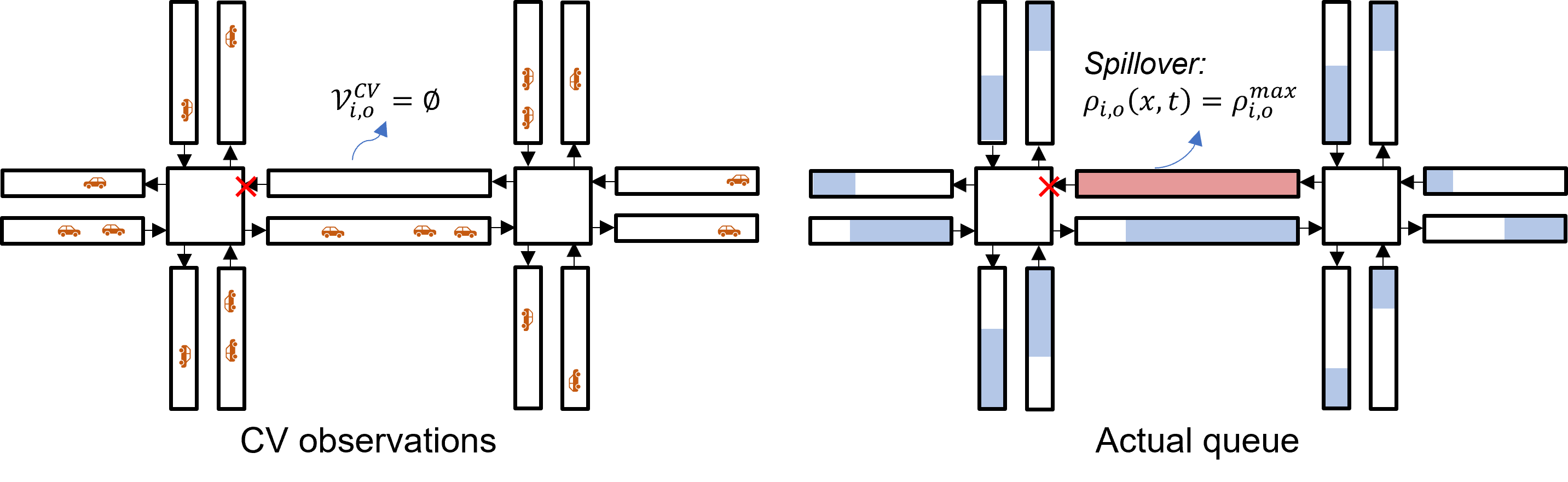}
    \caption{Queue starvation phenomenon due to lack of CV observations.}
    \label{fig: queue starvation}  
\end{figure}

According to Theorem \ref{thm: queue starvation immunity}, after incorporating historical CV data for the modification of pressure calculation, the proposed mTransit-MP can effectively avoid the unfavorable case of queue starvation that will unstabilize the network queue. 

\begin{theorem} [Queue starvation immunity of mTransit-MP] \label{thm: queue starvation immunity}
    Given the admissible demand region $\bm{\Lambda}$ in Definition \ref{def: admissible}, if the exogenous demand $\bm{a} \in \bm{\Lambda}^{int}$, the probability of a network controlled by mTransit-MP presented in Eq.~\eqref{eq: m-transit MP} experiencing queue starvation is 0 even in sparse CV environments, indicating that mTransit-MP is queue starvation immunity. 
\end{theorem}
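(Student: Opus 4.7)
My plan is to prove the theorem by contradiction: if queue starvation occurs for movement $(i,o) \in \mathcal{M_N}$ after some time $t_0$, I will show that the mTransit-MP selection rule must be violated. The intuition is that mTransit-MP's historical-data modification in Eq.~\eqref{eq: modified upstream tt} keeps the upstream pressure of an unserved movement strictly growing, eliminating the ``zero-pressure trap'' that enabled queue starvation under Transit-MP and CV-MP (cf.\ Theorem~\ref{thm: necessary condition}).

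First, I would establish the divergence of the upstream pressure. With $s_{i,o}(t) = 0$ for all $t \geq t_0$, the IQA recursion in Eq.~\eqref{eq: expected Q} yields $\mathbb{E}(Q_{i,o}(t)) = \mathbb{E}(Q_{i,o}(t_0)) + \hat{\lambda}_{i,o}(t-t_0)$, which grows linearly. Substituting into Eq.~\eqref{eq: estimated TT} gives $\hat{\tau}_{i,o}(t) = \Theta((t-t_0)^2)$, dominated by the quadratic $\hat{\psi}_{i,o}\mathbb{E}(Q_{i,o})^2/(2\hat{\lambda}_{i,o}\overline{ETT}_{i,o})$ term. Hence whenever $\mathcal{V}_{i,o}^{cv}(t) = \varnothing$, the branch $\tau_{i,o}^{m,p} = \hat{p}_{i,o}\hat{\tau}_{i,o}$ of Eq.~\eqref{eq: modified upstream tt} grows quadratically in $t$; whenever $\mathcal{V}_{i,o}^{cv}(t) \neq \varnothing$, at least one CV is necessarily trapped on link $i$ (since $s_{i,o}=0$), so $\sum_v \beta_v p_v \tau_v$ grows at least linearly through its $\tau_v$ contribution. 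In both cases $\tau_{i,o}^{m,p}(t) \to \infty$.

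Second, I would bound the downstream subtractor. The term $\sum_k r_{o,k}\tau_{o,k}$ uses only real-time CVs on link $o$ via Eq.~\eqref{eq: modified downstream tt}; the count of such CVs is upper-bounded by a constant proportional to the jam capacity of link $o$, and each $\tau_v$ increases at most at rate $1/\overline{ETT}_{o,k}$, so the subtractor grows at most linearly in $t$. Combining with the first step, the movement pressure $c_{i,o}(\tau_{i,o}^{m,p}-\sum_k r_{o,k}\tau_{o,k})$ diverges and is eventually strictly positive. Every phase containing $(i,o)$ then attains arbitrarily large total pressure, while any phase not containing $(i,o)$ either stays bounded (if its movements are periodically served, consistent with admissibility $\bm{a}\in\bm{\Lambda}^{int}$) or grows at the same quadratic order. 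There must then exist a finite $t^*>t_0$ at which a phase containing $(i,o)$ attains the network-wide maximum pressure; mTransit-MP selects it, forcing $s_{i,o}(t^*)=1$ and contradicting starvation. Therefore the probability of queue starvation is $0$.

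The main obstacle is the possibility that multiple unserved movements compete, so that some phase disjoint from $(i,o)$ grows the fastest and perpetually preempts $(i,o)$'s phase. I would address this with a rotation-style argument: every time a phase is selected, all its movements are served simultaneously, resetting their IQA counters; hence no single phase can monopolize the max-pressure rank indefinitely, because any phase with diverging pressure must be selected infinitely often (otherwise the same divergence-domination argument applies to it). Consequently, every phase containing $(i,o)$ must also be selected infinitely often, which is incompatible with queue starvation of $(i,o)$.
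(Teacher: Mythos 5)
Your proposal follows essentially the same route as the paper's proof: a contradiction argument assuming $s_{i,o}(t)=0$ for all $t\geq t_0$, a two-case analysis of the modified state $\tau^{m,p}_{i,o}$ (linear growth from CVs trapped on the link when $\mathcal{V}^{cv}_{i,o}\neq\varnothing$, quadratic growth of the IQA-based estimate $\hat{p}_{i,o}\hat{\tau}_{i,o}$ when $\mathcal{V}^{cv}_{i,o}=\varnothing$), and the conclusion that the diverging pressure eventually forces $s_{i,o}(t')=1$. Your added refinements (the bound on the downstream subtractor and the rotation-style argument against competing diverging phases) address the final domination step in more detail than the paper, which simply asserts that the divergence of $\tau^{m,p}_{i,o}$ yields a moment at which movement $(i,o)$ attains the maximum pressure; these additions are consistent with, and do not alter, the paper's argument.
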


\begin{proof}
    Definition \ref{def: queue starvation} is equivalent to saying that if a movement $(i,o)$ experiences queue starvation, in the long term, the average signal state $\bar{s}_{i,o}$ of the movement asymptotes to 0, i.e.,
    \begin{align}
        \bar{s}_{i,o} = \lim_{T \rightarrow \infty} \frac{1}{T} \sum_{t=1}^T s_{i,o} = \lim_{T \rightarrow \infty} \frac{1}{T} (\underbrace{\sum_{t=1}^{t_0} s_{i,o}}_{=\bar{s}_{i,o \mid \leq t_0}} + \underbrace{\sum_{t=t_0+1}^T s_{i,o}}_{=0}) = \lim_{T \rightarrow \infty} \frac{1}{T}  \bar{s}_{i,o \mid \leq t_0} = 0, \label{eq: t_0 exists}
    \end{align}
    where $\bar{s}_{i,o \mid <t_0}$ is a constant and $\bar{s}_{i,o \mid \leq t_0} \leq t_0$. 

    Then, Theorem \ref{thm: queue starvation immunity} is equivalent to saying that for $\forall (i,o) \in \mathcal{M_N}$ in the network controlled by mTransit-MP, the average signal state $\bar{s}_{i,o}$ asymptotes to a positive constant, i.e., 
    \begin{align}
        \bar{s}_{i,o} = \lim_{T \rightarrow \infty} \frac{1}{T} \sum_{t=1}^T s_{i,o}  > 0.
    \end{align}

    In other words, we need to prove that such a $t_0$ in Eq.~\eqref{eq: t_0 exists} does not exist for any movement $(i,o) \in \mathcal{M_N}$ under the control of mTransit-MP. 
    
    This can be easily proved by contradiction. Suppose that there exists a $t_0$ such that for any $t \geq t_0$, the signal state $s_{i,o}(t) = 0$. Then, looking into the modified traffic state of mTransit-MP presented in Eq.~\eqref{eq: modified upstream tt}, we have
    \begin{align}
        \tau^{m,p}_{i,o}(t) =
        \begin{cases}
            \sum_{v \in \mathcal{V}_{i,o}^{cv}} \beta_v (t) p_v (t) \tau_v (t) \quad \text{if} \quad \mathcal{V}_{i,o}^{cv} \neq \varnothing\\
            \hat{p}_{i,o}\hat{\tau}_{i,o}(t) \quad \text{if} \quad \mathcal{V}_{i,o}^{cv} = \varnothing
        \end{cases}.
    \end{align}
         
    In the case of $\mathcal{V}_{i,o}^{cv} \neq \varnothing$, as the signal state $s_{i,o}(t) = 0$ always holds, we have $\mathcal{V}_{i,o}^{cv} (t_0) \subseteq \mathcal{V}_{i,o}^{cv} (t)$, where $\mathcal{V}_{i,o}^{cv} (t_0)$ denotes those CVs on the link at moment $t_0$.
    Obviously, we have
    \begin{align}
        \tau^{m,p}_{i,o}(t) = \sum_{v \in \mathcal{V}_{i,o}^{cv} (t)} \beta_v (t) p_v (t) \tau_v (t) \geq \sum_{v \in \mathcal{V}_{i,o}^{cv} (t_0)} \beta_v (t) p_v (t) \tau_v (t), 
    \end{align}
    
    Note that, if the signal state is always 0 during the period $[t_o, t]$, then the link travel time of a CV $v$ at moment $t$ is calculated as $\tau_v(t) = \tau_v(t_0)+\frac{t-t_0}{\overline{ETT}_{i,o}}$. As 
    \begin{align}
        \lim_{t \rightarrow \infty} \sum_{v \in \mathcal{V}_{i,o}^{cv} (t_0)} \beta_v (t) p_v (t) \tau_v (t) = \lim_{t \rightarrow \infty} \sum_{v \in \mathcal{V}_{i,o}^{cv} (t_0)} \beta_v (t) p_v (t) (\tau_v (t_0)+ \underbrace{\frac{t-t_0}{\overline{ETT}_{i,o}}}_{\rightarrow \infty}) \rightarrow \infty, 
    \end{align}
    we have $\lim_{t \rightarrow \infty}\tau^{m,p}_{i,o}(t) \rightarrow \infty$ in the case of $\mathcal{V}_{i,o}^{cv} \neq \varnothing$.

    Regarding the case of $\mathcal{V}_{i,o}^{cv} = \varnothing$, i.e., no CV is observed, as $s_{i,o}(t) = 0$ for $t \geq t_0$, we have 
    \begin{align}
        \mathbb{E}(Q_{i,o}(t)) = \mathbb{E}(Q_{i,o}(t_0) + \int_{t_0}^tA_{i,o}(t)\mathrm{d}t) = Q_{i,o}(t_0) + (t-t_0)\hat{\lambda}_{i,o},
    \end{align}
    which gives 
    \begin{align}
        \hat{\tau}_{i,o}(t) = (Q_{i,o}(t_0) + (t-t_0)\hat{\lambda}_{i,o})\hat{\psi}_{i,o} + \frac{(Q_{i,o}(t_0) + (t-t_0)\hat{\lambda}_{i,o})^2\hat{\psi}_{i,o}}{2 \hat{\lambda}_{i,o} \overline{ETT}_{i,o}}
    \end{align}
    based on Eq.~\eqref{eq: estimated TT}. Obviously, we have
    \begin{align}
        \lim_{t \rightarrow \infty} \hat{p}_{i,o}\hat{\tau}_{i,o}(t) \rightarrow \infty
    \end{align}
    
    In summary, in both cases, we have $\lim_{t \rightarrow \infty}\tau^{m,p}_{i,o}(t) \rightarrow \infty$. Then, there must exist a moment $t' \geq t_0$ that makes the pressure of movement $(i,o)$ exceeding all other movements at the intersection, leading to $s_{i,o}(t') = 1$, which is a contradiction to the supposition that for any $t \geq t_0$, the signal state $s_{i,o}(t) = 0$. Thereby, such a $t_0$ in Eq.~\eqref{eq: t_0 exists} does not exist for any movement $(i,o) \in \mathcal{M_N}$ under the control of mTransit-MP, which completes the proof of Theorem \ref{thm: queue starvation immunity}.
    
\end{proof}

\section{Evaluation Results}
\subsection{Experimental settings}
A multi-modal corridor with three signalized intersections in Amsterdam is simulated by SUMO to evaluate the performance of the proposed MP controllers under realistic transit operation scenarios. Within the area, 7 tram lines and 8 bus lines are operated between 31 transit stations, which are strategically located near intersections or mid-links. The lane and signal phase configuration of three intersections is presented in Figure \ref{fig: studied corridor}. Three types of lanes are included: dedicated lanes for transit vehicles, mixed-use lanes for both private cars and transit vehicles, and general lanes for private cars. Movements within a box, indicated by arrows, constitute a phase, with arrow colors corresponding to different types of lanes. Specifically, within the same phase, trams have the highest priority, followed by buses, with private vehicles yielding last. In the Netherlands, transit vehicles emit a warning chime in practice when passing through the intersection, and private vehicles must yield. 
The MP controller can only activate the corresponding phase at each decision step.

The departure intervals for public transit lines range from 10 minutes to 20 minutes, which are obtained from the published time schedule. To model the occupancy dynamics of transit vehicles, passengers with random OD demands are generated to simulate the boarding and alighting process. Similar to \cite{tan2025cvmp}, during the three-hour simulation, the background traffic is input based on OD pairs, where the two-way main road OD demands (1-5 and 5-1) experience an increasing process and then decrease to model the time-varying traffic and test the capability of different controllers. 
Each experiment is repeated three times with different random seeds. 

\begin{figure}[ht]
    \centering
    \includegraphics[width=0.9\textwidth]{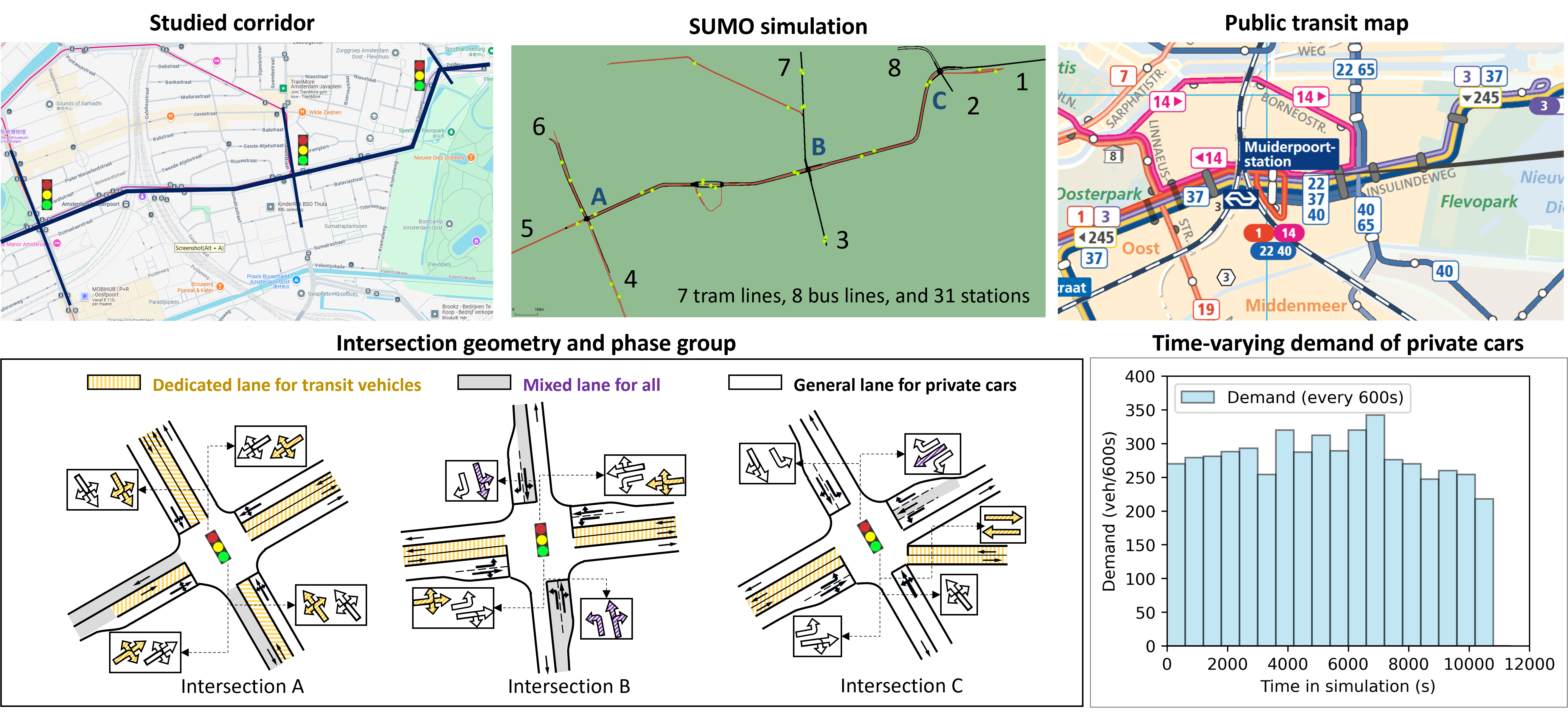}
    \caption{Real-world corridor with multi-modal traffic.}
    \label{fig: studied corridor}  
\end{figure}

The following methods are evaluated:
\begin{itemize}
    \item \textbf{OCC-MP} \citep{ahmed2024occ}: OCC-MP uses the average upstream occupancy $\overline{p}_{i,o}$ to weight the final movement pressure that is calculated based on the queue length (essentially the number of vehicles). To reduce the impact of various link lengths\footnote{\cite{varaiya2013max} offers two approaches to address long-link issues: link-length weighting and link segmentation into equal lengths. This paper primarily employs the former for experimentation. Since this is not the focus of this study and to alleviate reader concerns, we supplement the segmentation-based results in Appendix \ref{appendix: seg results}.}, the queue length is actually weighted by the reverse of the square root of the link length \citep{varaiya2013max}. The impact of transit stations is not considered. Specifically, the control policy of OCC-MP is 
    \begin{align}
        \bm{s}^* &= \arg \max_{\bm{s} \in \mathcal{S}} \sum_{n \in \mathcal{N}} \left( \sum_{\forall( i, o) \in \mathcal{M}_{n}} s_{i,o} c_{i,o} \overline{p}_{i,o}\left(\frac{\sum_{v \in \mathcal{V}_{i,o}^{cv}} 1}{\sqrt{L_i}} - \sum_{ (o,\forall k)\in \mathcal{M}_{n'}} r_{o,k} \frac{\sum_{v \in \mathcal{V}_{o,k}^{cv}} 1}{\sqrt{L_o}} \right)\right) \label{eq: OCC MP} 
    \end{align}
    where only CV data is used in partially CV environments. 
    \item \textbf{eOCC-MP}: eOCC-MP is the extended version of OCC-MP, which further considers the impact of transit stations by introducing parameter $\beta_v$ presented in Eq.~\eqref{eq: beta}, i.e., 
    \begin{align}
        \bm{s}^* &= \arg \max_{\bm{s} \in \mathcal{S}} \sum_{n \in \mathcal{N}} \left( \sum_{\forall( i, o) \in \mathcal{M}_{n}} s_{i,o} c_{i,o} \overline{p}_{i,o}\left(\frac{\sum_{v \in \mathcal{V}_{i,o}^{cv}} \beta_v}{\sqrt{L_i}} - \sum_{ (o,\forall k)\in \mathcal{M}_{n'}} r_{o,k} \frac{\sum_{v \in \mathcal{V}_{o,k}^{cv}} \beta_v}{\sqrt{L_o}} \right)\right) \label{eq: eOCC MP} 
    \end{align}
    \item \textbf{Transit-MP}: Transit-MP is the proposed extension of CV-MP controller for transit signal priority, which further incorporates the real-time vehicle occupancy at upstream links and considers the impact of transit stations, i.e., Eq.~\eqref{eq: transit MP}. 
    \item \textbf{mTransit-MP}: mTransit-MP is the proposed modified version of Transit-MP that is specialized for sparse CV environments, which can mitigate the queue starvation phenomenon by incorporating historical CV data, i.e., Eq.~\eqref{eq: m-transit MP}. 
\end{itemize}

The decision step $T_0$ is set to 10 seconds for all MP controllers, with the phase transition period consisting of $T_y = 3$ seconds of yellow time and no red clearance time. Considering the lost time due to the phase transition period, the saturated flow rate discount is considered by multiplying $(T_0-T_y-T_l)/T_0$ when switching phases, where $T_l=1$ is the green start-up lost time.

The following metrics are used to evaluate the performance of different MP controllers:
\begin{itemize}
    \item \textit{Average vehicle delay} (s/veh): the average time loss of vehicles during the network. According to vehicle types, we also have CV delay (private car only), NV delay, and transit delay.
    \item \textit{Average passenger delay} (s/veh): the average control delay of passengers taking transit vehicles. 
    \item \textit{Vehicle count} (veh): The real-time number of vehicles on the network. With the same traffic load, a lower vehicle count indicates a higher operational efficiency. We can use the maximum vehicle count to evaluate the overall performance of the controller during the whole simulation. 
    \item \textit{Spillover count} (veh): The real-time number of vehicles that are blocked from loading due to spillover at source links. Continuous increases in spillover count indicate that the network is unstable as the demand exceeds the capacity. Similarly, we can also use the maximum spillover count to evaluate the overall performance of the controller during the whole simulation.
    \item \textit{Unserved count} (veh): If we define the served vehicle as a vehicle that left the network, then the unserved vehicle is defined as the sum of the vehicle count and the spillover count. The lower the unserved count, the higher the throughput capacity of the controller, as more vehicles are served given the same traffic load. The maximum unserved count can also be used to evaluate the overall performance of the controller during the whole simulation.
\end{itemize}

\subsection{Performance in fully CV environments} \label{ssec: fully CV}
We first test the ideal performance of three MP controllers, i.e., OCC-MP, eOCC-MP, and Transit-MP, in a fully CV environment. Note that in such an environment, mTransit-MP is exactly the same as Transit-MP.

Figure \ref{fig: overall} shows that factoring transit-station effects into the controller (eOCC-MP) markedly improves network performance relative to the original OCC-MP. Under the control of eOCC-MP, transit vehicle delay falls by 23–30\%, yielding a 31.6\% reduction in passenger delay, while private car delay also drops by 16.9\%. Throughput performance also shows improvement. The peaks in vehicle count, spillover count, and unserved count are all noticeably lower, indicating better throughput capability once the impact of transit stations is captured. Building on those gains, the proposed transit-prioritized controller, Transit-MP, delivers still stronger results. Compared to eOCC-MP, it cuts transit-vehicle delay by a further 5–43\%, trimming passenger delay by an additional 17.9\%, and lowering private-car delay by another 21.8\%. Throughput benefits are even more pronounced by Transit-MP. The maximum spillover shrinks to 5.3 vehicles, which is 94.2\% less than eOCC-MP, while the maximum vehicle count and the maximum unserved count also decline sharply. The above results demonstrate the superiority of the proposed Transit-MP in smoothing the operation of multi-model traffic networks while maximizing the network throughput with mitigated spillovers. 

\begin{figure}[ht]
    \centering
    \includegraphics[width=0.9\textwidth]{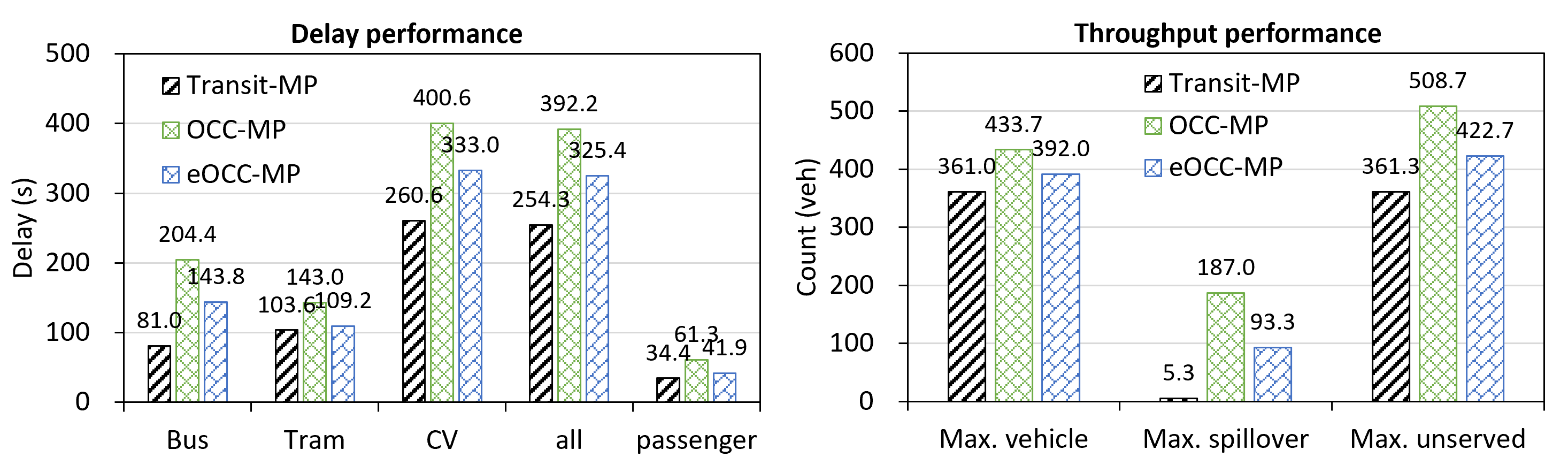}
    \caption{Overall performance of three MP controllers in fully CV environments.}
    \label{fig: overall}  
\end{figure}

\begin{figure}[ht]
    \centering
    \includegraphics[width=0.9\textwidth]{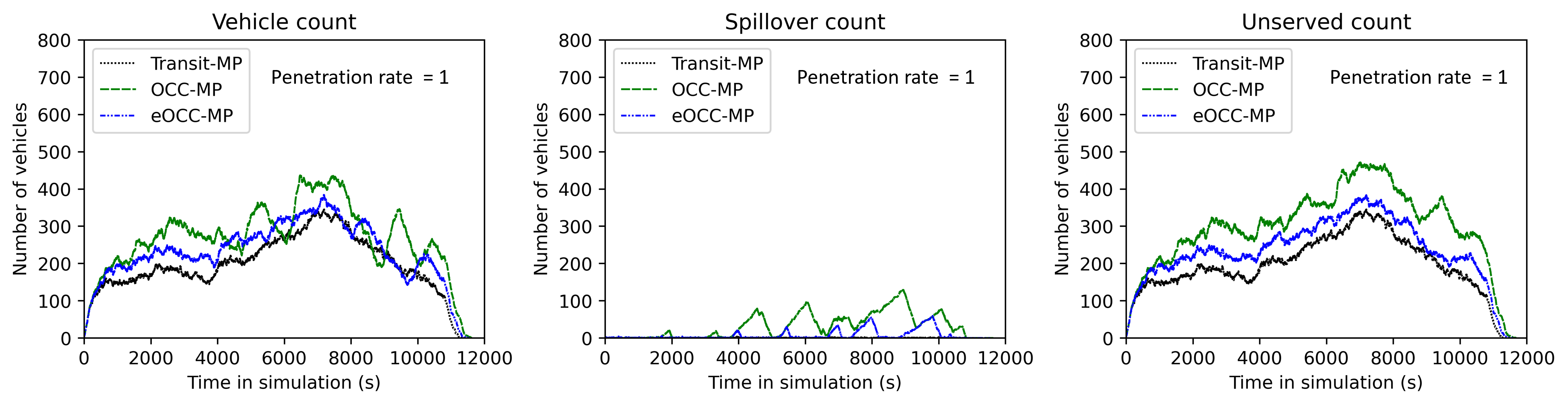}
    \caption{Real-time throughput  performance of three MP controllers in fully CV environments.}
    \label{fig: all cv real time}  
\end{figure}

Figure \ref{fig: all cv real time} presents the real-time throughput performance throughout the simulation. The vehicle count panel shows that the traffic demand builds rapidly during the first 2000~s and peaks between 6000~s and 8000~s. Among three MP controllers, Transit-MP keeps the number of vehicles in the network lowest, topping out at roughly 330 vehicles, while eOCC-MP peaks at around 380 vehicles, and OCC-MP climbs to about 450 vehicles. Once demand subsides after 10000 s, Transit-MP also clears the residual vehicles in the network fastest. A similar trend appears in the unserved count panel, indicating that Transit-MP can better accommodate traffic under the same traffic loading compared to OCC-MP and eOCC-MP. The difference is most striking in the spillover count pane. Transit-MP almost eliminates spillovers, whereas eOCC-MP reaches modest spikes up to 80 vehicles, and OCC-MP suffers the largest overflows, brief surges exceeding 120 vehicles around 9000~s. Taken together, the three panels confirm that factoring station effects in MP control can improve multi-modal traffic performance, and the proposed Transit-MP yields the most stable and efficient real-time performance, achieving lower in-network vehicles, minimal spillover, and faster recovery once the peak has passed.

The primary reason for the poor performance of OCC-MP in our cases lies in its practice of incorporating vehicles into pressure calculations immediately upon their entry into the link. When dealing with longer links, as exemplified in this study, prematurely factoring in vehicles with high occupancy without considering their distance from intersections results in granting priority green lights to transit vehicles too early. This leads to significant green light wastage, thereby increasing delays and spillovers for vehicles in other movements. Additionally, when transit vehicles require station stops along the link, the persistent priority assignment of OCC-MP similarly causes green light wastage. For issues arising from excessively long links, a feasible improvement involves segmenting the link \citep{varaiya2013max, ahmed2024occ}, though the optimal segmentation method remains open for discussion. eOCC-MP addresses the second issue by accounting for station impacts, effectively preventing priority green light wastage during transit vehicle stops at stations and achieving better performance than OCC-MP. Transit-MP, by using vehicle travel time as the basis for calculating pressure, accounts for distance from intersections. This allows it to better balance transit priority with private vehicle delays, making it less susceptible to the effects of long links. Furthermore, by also considering the impact of station stops, Transit-MP allocates green light time more reasonably, achieving the best overall performance.

\subsection{Performance in partially CV environments}
Figure \ref{fig: different p} contrasts four controllers, i.e., OCC-MP, eOCC-MP, Transit-MP, and its modified version mTransit-MP, over rising CV penetration rates from 10\% to 70\%. Note that, since we do not specify the arrival rate estimator for mTransit-MP, we use the average 30-minute flow rate of the first experiment as the arrival rate estimate. In this case, the average arrival rate for the second and third experiments is actually with errors, as the input demand varies given different random seeds. As for the real-time queue length in Eq.~\eqref{eq: real-time queue}, we directly extract it from the simulation. We do this in order to first test the performance of mTransit-MP in the ideal case, and in subsequent sections, we will further test the impact of parameter estimation errors.

\begin{figure}[ht]
    \centering
    \includegraphics[width=0.9\textwidth]{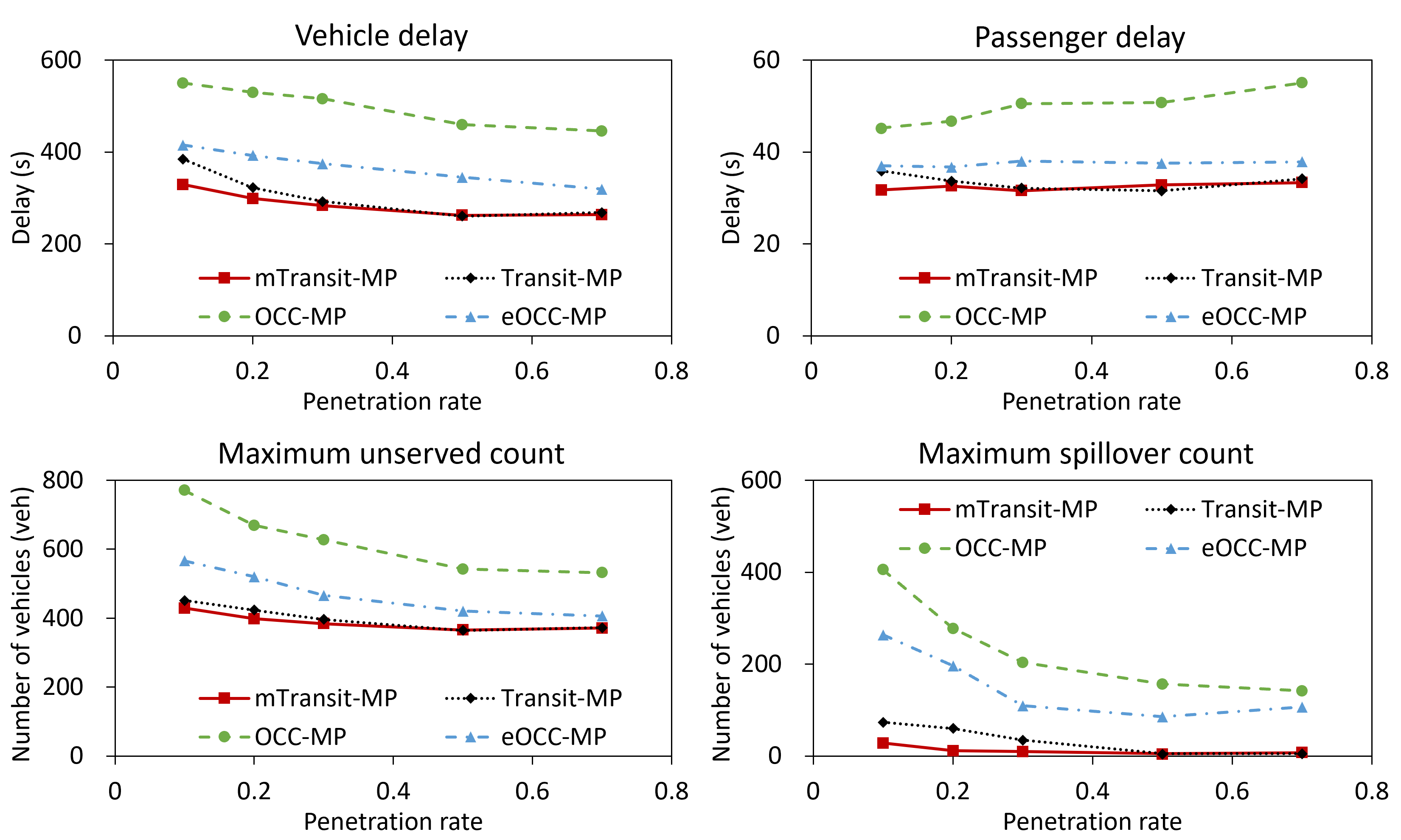}
    \caption{Overall performance of four MP controllers at different penetration rates.}
    \label{fig: different p}  
\end{figure}

Overall, as CV penetration rate rises, all network performance metrics, i.e., average vehicle delay, maximum unserved vehicle count, and maximum spillover count, decline almost monotonically. This is because, with more CV data, the pressures fed to the MP-based controllers better reflect actual demand, allowing them to respond more effectively. Passenger delay, the key transit performance metric, moves in the opposite direction. For mTransit-MP, OCC-MP, and eOCC-MP, it increases as the penetration rate grows. This is because private vehicles dominate the pressure calculation at higher penetration rate levels; transit vehicles receive proportionally less weight and thus enjoy less priority. Note that passenger delay results of Transit-MP show a different trend in sparse CV environments when the penetration rate is no more than 0.2; it decreases when the penetration rate grows to 0.3. This is due to the fact that in some of the experiments in sparse CV scenarios, significant spillover occurs at some links due to the more extreme CV distributions, resulting in an overall degradation of the road network performance. OCC-MP and eOCC-MP experience spillover in all penetration rate scenarios, so the trend is consistent; mTansitMP almost mitigates spillover in sparse CV scenarios, so the trend is also consistent.

Across all penetration rates, the same performance hierarchy holds. OCC-MP consistently performs the worst. Factoring transit station effects in pressure calculation helps eOCC-MP narrow the gap, improving both network and transit performance. Even so, the two proposed MP controllers, Transit-MP and mTransit-MP, outperform the others on every measure and are notably less sensitive to penetration rate changes, which confirms the superiority of the proposed method.

When the penetration rate reaches 0.5, the performance difference between Transit-MP and mTransit-MP virtually disappears. At this point, it is rare for a short link to have no observed CVs, so Transit-MP almost has no spillover, and the modification mechanism unique to mTransit-MP is seldom triggered. As shown in Figure \ref{fig: 0.5 penetration}, at 0.5 penetration rates, the real-time performance of Transit-MP and mTransit-MP has no significant difference. 

\begin{figure}[ht]
    \centering
    \includegraphics[width=0.9\textwidth]{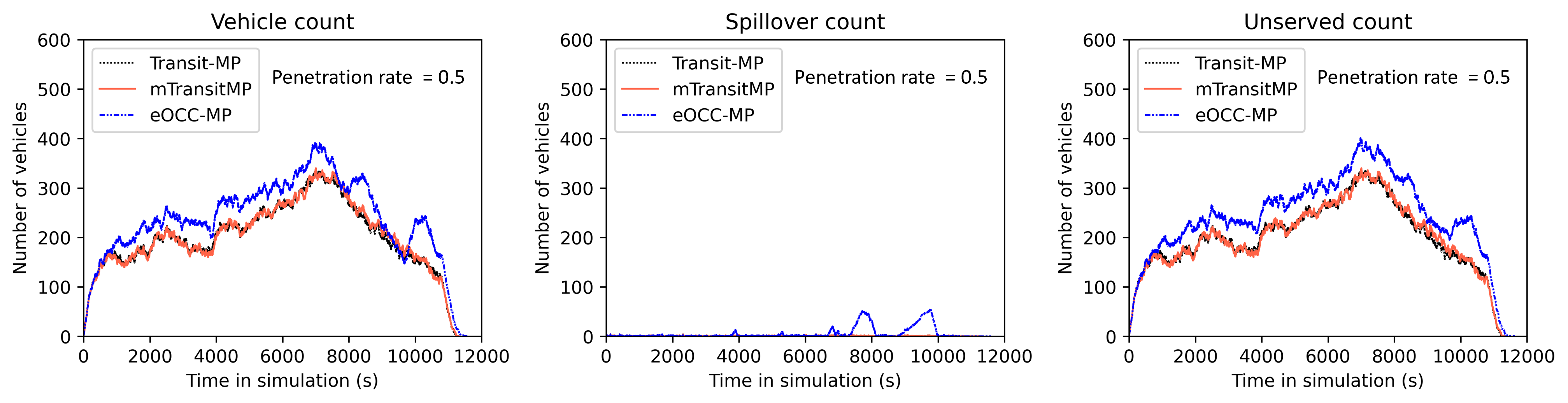}
    \caption{Real-time performance of mTransit-MP at a 0.5 penetration rate.}
    \label{fig: 0.5 penetration}  
\end{figure}

In sparse CV environments (0.1 and 0.2 penetration rates), however, short links can suffer spillovers when no CVs are observed for Transit-MP and eOCC-MP, sharply increasing the vehicle delay, unserved count, and spillover count compared to mTransit-MP. 
By leveraging historical traffic data, mTransit-MP prevents these spillovers, maintaining network performance under low penetration rate conditions.

\subsection{Detailed performance in sparse CV environments}
Figure \ref{fig: 0.1 penetration} compares three MP controllers, eOCC-MP, Transit-MP, and mTransit-MP, under a sparse CV environment (0.1 penetration). In terms of vehicle count, the three controllers show insignificant differences for most of the time. Overall, mTransit-MP maintains the lowest count, Transit-MP is slightly higher, and eOCC-MP performs the worst. Nevertheless, after roughly 9000 seconds, the vehicle count of eOCC-MP rises sharply. 

\begin{figure}[ht]
    \centering
    \includegraphics[width=0.9\textwidth]{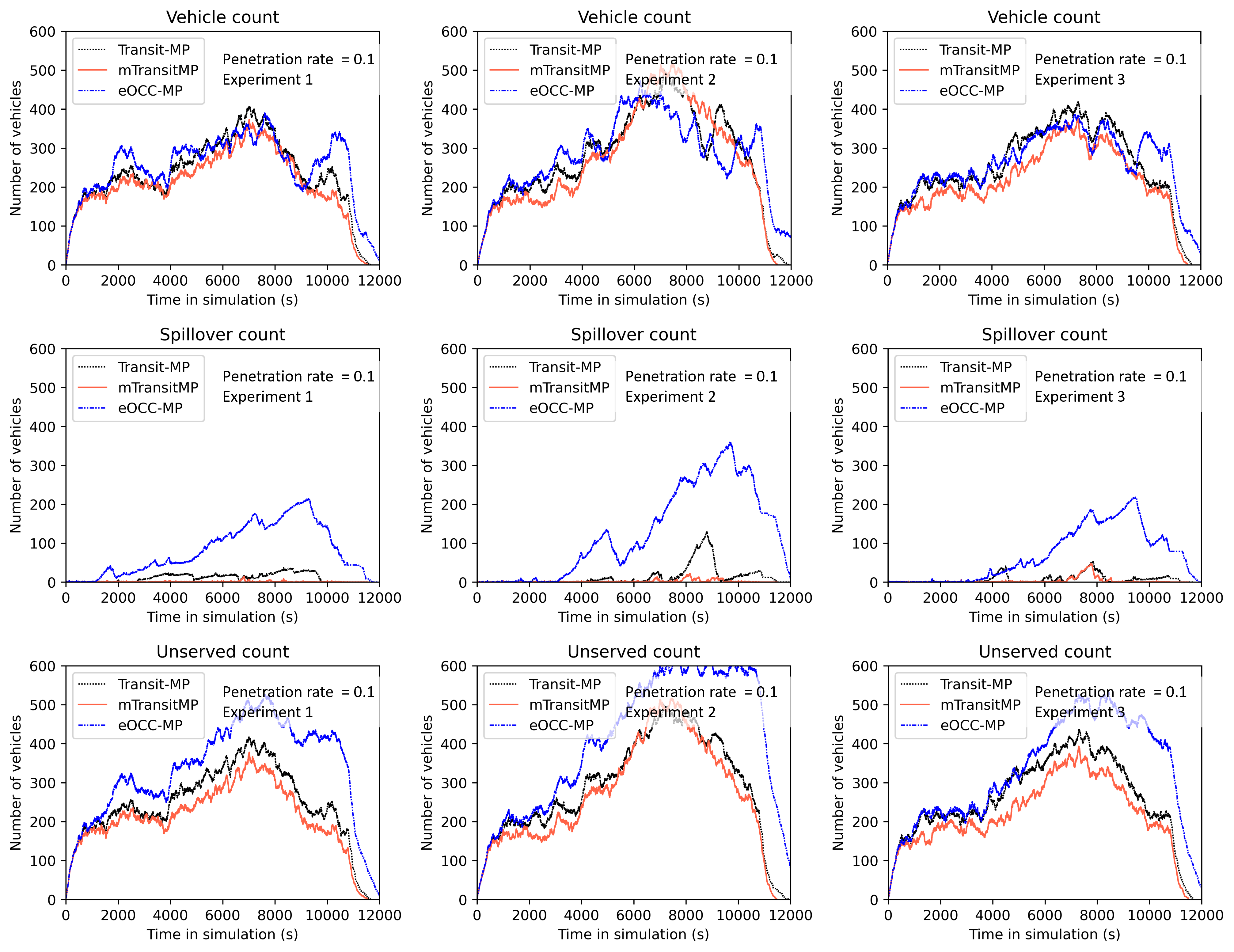}
    \caption{Real-time performance of mTransit-MP in sparse CV environments.}
    \label{fig: 0.1 penetration}  
\end{figure}

A closer look at spillover behavior explains this phenomenon. eOCC-MP begins accumulating spillover vehicles almost from the early stage of simulation, which is a clear sign of the queue starvation phenomenon. Because in our simulation, any vehicle waiting more than 1000~s will be teleported downstream to prevent permanent gridlock, the spillover backlog finally clears once demand on competing links subsides; hence the sudden increase in vehicle count after 9000~s. Transit-MP also experiences spillovers, but only intermittently. In Experiment 2, where demand is higher, these spillover episodes last longer. By contrast, mTransit-MP, which is augmented with historical traffic data for modification, greatly suppresses both the frequency and duration of spillovers, demonstrating the benefit of its modification mechanism in CV data-sparse conditions.

The unserved vehicle counts provide a more intuitive view of the effectiveness of each controller in maximizing throughput in sparse CV environments. Consistent with vehicle count and spillover count, mTransit-MP is the most efficient, Transit-MP follows, and eOCC-MP remains a distant third.

\subsection{Impact of parameter estimation errors}

\begin{table}[t]
  \centering
  \caption{Performance of mTransit-MP with different levels of parameter errors at 0.1 CV penetration rates}
  \label{tab: error impact}
  \begin{tabularx}{18cm}{c c *{5}{>{\raggedleft\arraybackslash}X}}
    \toprule
    \textbf{Method} & \textbf{Error level} & \textbf{Vehicle delay (s)} & \textbf{Passenger delay (s)} & \textbf{Max.\ spillover (veh)} & \textbf{Max.\ unserved (veh)} \\
    \midrule
    \multirow{7}{*}{mTransit-MP}
        &-50\%	&285.8 	&31.0 	&8.4 	&383.2 \\
        &-30\%	&281.2 	&30.0 	&12.6 	&380.4 \\
        &-20\%	&281.6 	&30.1 	&10.8 	&373.6 \\
        &-10\%	&281.1 	&29.9 	&10.0 	&381.8 \\
        &0	&281.0 	&30.2 	&10.4 	&372.4 \\
        &10\%	&279.6 	&29.9 	&10.0 	&372.8 \\
        &20\%	&284.6 	&30.3 	&10.4 	&373.0 \\
        &30\%	&291.0 	&31.1 	&11.6 	&383.0 \\
        &50\%	&285.0 	&29.9 	&12.0 	&380.0 \\
        &STD	&3.6 	&0.5 	&1.3 	&4.7 \\
    \midrule
    Transit-MP & -- & 336.7	& 33.0 & 37 & 417 \\
    eOCC-MP    & -- & 384.4 & 34.8& 214 & 527 \\
    \bottomrule
  \end{tabularx}
\end{table}

Recall that we use the accurate parameters, i.e., arrival rate $\hat{\lambda}_{i,o}$ and queue length $Q_{i,o}$, for each decision step of mTransit-MP in our previous sections to test its performance in the ideal cases. In this section, we manually add different estimation errors to the arrival rate and the queue length to test the performance of mTransit-MP in more realistic scenarios. The base scenario is experiment 1 at a 0.1 penetration rate. 

This section sets 9 error levels for both the arrival rate and the queue length, ranging from -50\% to 50\% with a step of 10\%, where negative errors indicate the underestimation and positive errors indicate overestimation. Each error level represents the expectation of the error, and the actual error for each parameter will be randomly generated within a range of 5\% up or down. For example, at -20\%, random errors are generated between [-25\%, -15\%]. Repetition experiments are conducted five times with different random seeds. 

Table \ref{tab: error impact} lists the detailed results of mTransit-MP for different error levels. As the absolute error increases, the overall performance of mTransit-MP decreases slightly, mainly in the impact on background traffic. When the absolute error is larger, the vehicle delay and the maximum spillover count are increased, but the passenger delay is almost unchanged. Comparing the results of over-estimation and under-estimation, it can be seen that the increase in vehicle delay and maximum spillover counts is greater in the over-estimation scenario than in the under-estimation scenario. However, mTransit-MP still significantly outperforms Transit-MP and eOCC-MP even at high absolute error levels, which again proves the necessity and effectiveness of our modification mechanism and that the modification mechanism is relatively robust, i.e., effective even with significant errors. The standard deviation (STD) results across different error levels show that the proposed mTransit-MP is insensitive to parameter errors, which demonstrates its potential for easy practical applications.

\section{Conclusion and Future Work}
On the basis of CV-MP \citep{tan2025cvmp}, we proposed Transit-MP for transit signal priority in partially CV environments. To prioritize high-occupancy transit vehicles, the real-time occupancy information of CVs at upstream links is incorporated into the pressure calculation. Besides, the impact of transit stations is also considered by counting transit vehicles only after they leave the nearest station. The incorporation of upstream occupancy information leads to a distinction of traffic state calculation for upstream and downstream links, which is the source of our contribution to the stability proof, more generally, and inclusive of existing MP controllers. We prove that Transit-MP can guarantee network stability even with partially CVs. In addition, considering the queue starvation phenomenon in sparse CV environments, i.e., a movement may no longer receive the green phase despite the queue spillover due to the lack of CV observations, we proposed modified Transit-MP, i.e., mTransit-MP, that incorporates historical traffic information, which has been shown to theoretically avoid this phenomenon that will lead to network destabilization. 

Through comprehensive simulation experiments at a real-world multi-modal corridor in Amsterdam with multiple tram and bus lines, we found that: 1) considering the impact of transit stations can further improve both the background and transit traffic performance, as evidenced by the improved performance (17.0\% reduction in multi-modal vehicle delays and 31.6\% reduction in passenger delays) of the extended OCC-MP (i.e., eOCC-MP) with considering transit stations compared with original OCC-MP without such a consideration \citep{ahmed2024occ}; 2) Transit-MP significantly outperforms eOCC-MP in various penetration rates with reduced transit and private vehicle delay as well as fewer real-time vehicle count and spillover count; at 0.2 penetration rates, the multi-modal vehicle delay reduced 21.9\% and the maximum spillover vehicles reduced 94.3\%; this is attributed to transit-MP taking into account the cumulative delay of vehicles; 3) mTransit-MP can further reduce spillover vehicles compared with Transit-MP by incorporating historical traffic information for a modification mechanism, especially in sparse CV environments; at a 0.1 penetration rate, the spillover vehicle reduced 61.8\% along with 14.2\% reduction in multi-modal vehicle delay and 11.7\% reduction in passenger delay; 4) the estimation error of historical traffic information has negligible impacts on mTransit-MP, demonstrating its potential for easy practical applications without careful and laborious calibrations. 5) An appropriate link segmentation strategy can enhance the performance of MP controllers (See Appendix \ref{appendix: seg results}). 

In the future, more traffic modes, including bicycles and pedestrians \citep{liu2024max, xu2022integrating} need to be considered together, which pose extra challenges on the design of pressure calculation as well as the stability of the network. Besides, the utilization of vehicular data for pressure calculation in CV-based MP controllers may impose privacy issues; thus, future work needs to integrate privacy-preserving mechanisms with MP controllers to protect the private data of CVs \citep{tan2024privacy, tan2025privacy}. Finally, although our sensitivity tests indicate that appropriately segmenting links can improve the performance of MP controllers, the optimal segmentation strategies differ across various MP controllers. How to segment long links remains worthy of further exploration in both theoretical and experimental contexts for MP control.

\section*{Acknowledgment}
Chaopeng Tan, Marco Rinaldi, and Hans van Lint would like to acknowledge the financial contribution of the EU Horizon Europe Research and Innovation Programme, Grant Agreement No. 101103808 ACUMEN; Hao Liu was supported by the NSF, United States Grant CMMI-2501601.

\appendix
\section{Determination of \texorpdfstring{$\beta_v$}{beta	extunderscore v} in more intelligent scenarios} \label{appendix: beta}
In more intelligent scenarios, if the remaining dwell time of the transit vehicle is known, then we can determine $\beta_v$ in a more precise manner. At each signal decision step, we let
\begin{align}
    \beta_v = \begin{cases}
        1, \text{ if } \alpha_v = 0, \\
        1, \text{ if } \alpha_v = 1, \text{ and } t_v^{EAT} < T_0, \\
        0, \text{ if } \alpha_v = 1, \text{ and } t_v^{EAT} \geq T_0.
    \end{cases} \label{eq: beta, precise}
\end{align}
where $T_0$ denotes the signal decision step length for MP control, and $t_v^{EAT}$ is the expected arrival time of the transit vehicle $v$. Note that there are many existing studies focused on the estimation of $t_v^{EAT}$ \citep{tan2008prediction, cvijovic2022conditional}. For the sake of methodological integrity, here we provide a simple estimate:
\begin{align}
    t_v^{EAT} = \begin{cases}
        \frac{ L_{ts}-x_v }{\varphi^{max}} + T_v^{dwell} + \frac{L- L_{ts}}{\varphi^{max}} + \theta, \text{ if } x_v \leq L_{ts} \\
        \frac{ L-x_v}{\varphi^{max}} + \theta , \text{ if } x_v > L_{ts}
    \end{cases}.
\end{align}
where $\varphi^{max}$ denotes the maximum speed; $T_v^{dwell}$ denotes the remaining dwell time, which is assumed to be known if the system is intelligent enough; $\theta$ is a buffer value to compensate for the acceleration and deceleration process. In the case of $x_v \leq L_{ts}$, i.e., the transit vehicle is on the upstream of the station or dwelling in the station, $\frac{ x_v - L_{ts}}{\varphi^{max}}$ and $\frac{L- L_{ts}}{\varphi^{max}}$ calculates the period arriving the station and the period leaving for stopline, respectively. In more complex scenarios with mixed private and public traffic, some methods also take into account the impact of residual queues for $t_v^{EAT}$ estimation \citep{liang2023two}. Since this is not the focus of our method, we will not discuss it further here.

\section{Stability of Transit-MP in heterogeneously distributed CV environments} \label{appendix: heterogeneously}
\begin{theorem}[Stability of Transit-MP in heterogeneously distributed CV environments] \label{thm: stability of Transit-MP in heterogeneously}
    Given the reduced admissible demand region $\bm{\Lambda}' =\{\bm{a} \mid \bm{a} \preceq \frac{\xi^{min}}{\xi^{max}}(\bm{I} - \bm{r})\bm{c}\bar{\bm{s}}, \quad \exists \bar{\bm{s}} \in \bm{S}^{co}\}$ and $\bm{\Lambda}' \subseteq \bm{\Lambda}$, if the exogenous demand $\bm{a} \in \bm{\Lambda}^{'int}$ (the interior of $\bm{\Lambda}'$), the proposed Transit-MP presented in Eq.~\eqref{eq: transit MP} (or Eq.~\eqref{eq: Transit-MP, density} in density form) can strongly stabilize the traffic network queues in heterogeneously distributed CV environments.  
\end{theorem}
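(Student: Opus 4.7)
The plan is to rerun the Lyapunov drift analysis of Theorem~\ref{thm: stability of transit-MP} up to and including the bounds $K_2,K_3,K_4,K_5$ and the decomposition $\mathbb{E}^{\bm{\rho}(t)}(\mathrm{d}V/\mathrm{d}t)\le K_2+K_3+\eta_1+\eta_2$. Those constants depend only on the universal upper bounds of Lemmas~\ref{lemma: upper bound} and~\ref{lemma: weights}, which are insensitive to how CVs are distributed across the network, so they transfer unchanged; $\eta_2$ is likewise handled by the same argument because it never invokes the CV-to-all conversion. The only step that must be rebuilt is the treatment of $\eta_1$, where the homogeneous identity $\pi\mathbb{E}^{\bm{\rho}(t)}(\bm{w}^d)=\mathbb{E}^{\bm{\rho}(t)}(\bm{w}^{d,cv})$ was used. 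Under heterogeneity only the per-movement identity $\xi_{i,o}\mathbb{E}^{\bm{\rho}(t)}(w^d_{i,o})=\mathbb{E}^{\bm{\rho}(t)}(w^{d,cv}_{i,o})$ holds, with $\xi_{i,o}\in[\xi^{min},\xi^{max}]$, so a uniform scalar can no longer be factored out of an inner product.

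To preserve the correct bound direction in each piece, I would decompose $(\bm{I}-\bm{r})\bm{c}\bm{s}^*=\bm{c}\bm{s}^*-\bm{r}\bm{c}\bm{s}^*$ into two componentwise non-negative vectors and split $\eta_1=\mathbb{E}(\{\bm{w}^d\}^\top\bm{a})-\mathbb{E}(\{\bm{w}^d\}^\top\bm{c}\bm{s}^*)+\mathbb{E}(\{\bm{w}^d\}^\top\bm{r}\bm{c}\bm{s}^*)$. Applying $1/\xi_{i,o}\le 1/\xi^{min}$ to the demand and inflow terms (both with a $+$ sign in $\eta_1$) and $1/\xi_{i,o}\ge 1/\xi^{max}$ to the outflow term (with a $-$ sign) yields an upper bound on $\eta_1$ purely in terms of $\bm{w}^{d,cv}$. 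Recombining produces $-\tfrac{1}{\xi^{max}}\mathbb{E}(\{\bm{w}^{d,cv}\}^\top(\bm{I}-\bm{r})\bm{c}\bm{s}^*)$ plus a residual $(\tfrac{1}{\xi^{min}}-\tfrac{1}{\xi^{max}})\mathbb{E}(\{\bm{w}^{d,cv}\}^\top\bm{r}\bm{c}\bm{s}^*)$, which is a bounded constant by Lemmas~\ref{lemma: upper bound} and~\ref{lemma: weights} (using that $\bm{r}\bm{c}\bm{s}^*$ vanishes on $\mathcal{M_F}$) and can be absorbed into $K$. Invoking the Transit-MP optimality inequality of Eq.~\eqref{eq: rewrite pressure of transit-mp} to swap $\bm{s}^*$ for the fluid policy $\bar{\bm{s}}$, and treating the resulting $\chi_1$ residual exactly as in the derivation of Eq.~\eqref{eq: K5}, then delivers
\begin{equation*}
\eta_1 \le \frac{1}{\xi^{max}}\,\mathbb{E}^{\bm{\rho}(t)}\left(\{\bm{w}^{d,cv}\}^\top\left[\frac{\xi^{max}}{\xi^{min}}\bm{a} - (\bm{I}-\bm{r})\bm{c}\bar{\bm{s}}\right]\right) + K'.
\end{equation*}

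The reduced region $\bm{\Lambda}'$ is engineered precisely so that $\tfrac{\xi^{max}}{\xi^{min}}\bm{a}-(\bm{I}-\bm{r})\bm{c}\bar{\bm{s}}\preceq -\epsilon\tfrac{\xi^{max}}{\xi^{min}}\bm{1}$ for some $\epsilon>0$ in its interior, which collapses the bracket to $-(\epsilon/\xi^{min})\mathbb{E}(\{\bm{w}^{d,cv}\}^\top\bm{1})$. Reusing the chain $\mathbb{E}(\{\bm{w}^{d,cv}\}^\top\bm{1})\ge \xi^{min}\mathbb{E}(\{\bm{w}^d\}^\top\bm{1})\ge \xi^{min}\tau^{min}\mathbb{E}(\bm{z}^\top\bm{1})$ from the proof of Theorem~\ref{thm: stability of transit-MP} gives a drift inequality of the form $\mathbb{E}^{\bm{\rho}(t)}(\mathrm{d}V/\mathrm{d}t)\le K-\epsilon\tau^{min}\mathbb{E}(\bm{z}^\top\bm{1})$, and Lemma~\ref{lemma: sufficient condition} closes the proof. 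The main obstacle is the bookkeeping of these bound directions: whenever an inner product over movements has sign-indefinite summands, neither $1/\xi^{min}$ nor $1/\xi^{max}$ gives a clean inequality, so the non-negative decomposition of $(\bm{I}-\bm{r})\bm{c}\bm{s}^*$ is indispensable. The penalty factor $\xi^{min}/\xi^{max}$ shrinking the admissible region is exactly the cost of the asymmetric use of $1/\xi^{min}$ on the demand-and-inflow side and $1/\xi^{max}$ on the outflow side of this conversion.
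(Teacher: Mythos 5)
Your proposal is correct, and its overall architecture (reuse $K_2,K_3,K_4$, rebuild only $\eta_1$, swap $\bm{s}^*$ for $\bar{\bm{s}}$ via the MP optimality inequality of Eq.~\eqref{eq: rewrite pressure of transit-mp}, absorb $\chi_1$ as in Eq.~\eqref{eq: K5}, and invoke the interior of the reduced region $\bm{\Lambda}'$) coincides with the paper's Appendix~B proof; the two arguments even land on the same intermediate expression $\{\bm{w}^{d,cv}\}^\top\bigl(\tfrac{1}{\xi^{min}}\bm{a}-\tfrac{1}{\xi^{max}}(\bm{I}-\bm{r})\bm{c}\bar{\bm{s}}\bigr)$ before extracting $-\epsilon$. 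Where you genuinely diverge is the CV-to-all-vehicle conversion of the discharge term under heterogeneous rates: the paper converts $\mathbb{E}^{\bm{\rho}(t)}(\{\bm{w}^{d}\}^\top(\bm{I}-\bm{r})\bm{c}\bm{s}^*)\ge \tfrac{1}{\xi^{max}}\mathbb{E}^{\bm{\rho}(t)}(\{\bm{w}^{d,cv}\}^\top(\bm{I}-\bm{r})\bm{c}\bm{s}^*)$ in one shot, justifying the bound direction by the per-movement non-negativity of the pressure inherited from Eqs.~\eqref{eq: positive pressure} and~\eqref{eq: c= 0 condition}, and thus incurs no extra constant; you instead split $(\bm{I}-\bm{r})\bm{c}\bm{s}^*$ into the two componentwise non-negative pieces $\bm{c}\bm{s}^*$ and $\bm{r}\bm{c}\bm{s}^*$, apply $1/\xi^{max}$ and $1/\xi^{min}$ respectively, and pay a residual $(\tfrac{1}{\xi^{min}}-\tfrac{1}{\xi^{max}})\mathbb{E}^{\bm{\rho}(t)}(\{\bm{w}^{d,cv}\}^\top\bm{r}\bm{c}\bm{s}^*)$, which is indeed a finite constant because fictitious movements receive no turning inflow and $w^{d,cv}$ is bounded on $\mathcal{M_N}$ by Lemmas~\ref{lemma: upper bound} and~\ref{lemma: weights}. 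What each buys: the paper's route is shorter and gives a tighter drift constant, but it leans on the sign of the mixed per-movement quantity $w^d_{i,o}-\sum_k r_{o,k}w^d_{o,k}$, which is only enforced by the controller in its CV-observed form; your sign-based decomposition makes the bound-direction bookkeeping explicit per term and never needs that non-negativity, at the harmless cost of one more additive constant absorbed into $K$. Both proofs then close identically via $\mathbb{E}^{\bm{\rho}(t)}(\{\bm{w}^{d,cv}\}^\top\bm{1})\ge\xi^{min}\mathbb{E}^{\bm{\rho}(t)}(\{\bm{w}^{d}\}^\top\bm{1})\ge\xi^{min}\tau^{min}\mathbb{E}^{\bm{\rho}(t)}(\bm{z}^\top\bm{1})$ and Lemma~\ref{lemma: sufficient condition}.
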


\begin{proof}
    The first part for proving Theorem \ref{thm: stability of Transit-MP in heterogeneously} is identical to Theorem \ref{thm: stability of transit-MP}, diverging after Eq.~\eqref{eq: rewrite pressure of transit-mp}.

     According to \citet{tan2025cvmp}, if we assume that the probability of a vehicle being a CV follows a Bernoulli distribution, i.e., $Pr(v \in \mathcal{V}^{cv}_{i,o}) = \pi_{i,o}>0$, where $\pi_{i,o}$ can vary across different movements, then we have 
    \begin{align} 
        \pi^{min} \mathbb{E}^{\bm{\rho}(t)}(\bm{w}^d) \preceq \mathbb{E}^{\bm{\rho}(t)}(\bm{w}^{d,cv}) \preceq \pi^{max} \mathbb{E}^{\bm{\rho}(t)}(\bm{w}^d) 
    \end{align}
    where $\pi^{min}$ and $\pi^{max}$ are two constants determined by the penetration rates $\pi_{(i,o)}$ with $(i,o) \in \mathcal{M_{N \cup F}}$ across the network.

    Based on Eq.~\eqref{eq: positive pressure}, we have the original pressure for each movement is non-negative, i.e., $\{\{\bm{w}^{d}\}^\top (\bm{I}- \bm{r})\bm{c}\bm{s}^*\}_{\forall(i,o) \in \mathcal{M_N}} \geq 0$, then we have
    \begin{align}
        \mathbb{E}^{\bm{\rho}(t)} (\{\bm{w}^{d}\}^\top (\bm{I}- \bm{r})\bm{c}\bm{s}^*) & \geq \frac{1}{\pi^{max}} \mathbb{E}^{\bm{\rho}(t)} (\{\bm{w}^{d,cv}\}^\top (\bm{I}- \bm{r})\bm{c}\bm{s}^*)\nonumber \\
        &= \frac{1}{\pi^{max}} \mathbb{E}^{\bm{\rho}(t)}((\{\bm{w}^{u,cv}\}^\top - \{\bm{w}^{d,cv}\}^\top \bm{r})\bm{c}\bm{s}^* - (\{\bm{w}^{u,cv}\}^\top - \{\bm{w}^{d,cv}\}^\top)\bm{c}\bm{s}^*) \nonumber \\
        &\geq \frac{1}{\pi^{max}} \mathbb{E}^{\bm{\rho}(t)}(\{\bm{w}^{d,cv}\}^\top (\bm{I}- \bm{r})\bm{c}\bar{\bm{s}} - \underbrace{(\{\bm{w}^{u,cv}\}^\top - \{\bm{w}^{d,cv}\}^\top)\bm{c}(\bm{s}^*-\bar{\bm{s}})}_{\chi_1}).
    \end{align}
    
    Back to $\eta_1$, according to the definition of the reduced admissible region, there exists a positive $\epsilon$ that makes $\bm{a} -  \frac{\pi^{min}}{\pi^{max}}(\bm{I} - \bm{r})\bm{c}\bar{\bm{s}} \preceq -\epsilon \bm{1}$, then we have
    \begin{align}
         \eta_1 =& \mathbb{E}^{\bm{\rho}(t)}\left(\{\bm{w}^d\}^\top \bm{a} - \{\bm{w}^d\}^\top(\bm{I}-\bm{r}) \bm{c}\bm{s}^*\right) \nonumber \\
         \leq & \frac{1}{\pi^{min}} \mathbb{E}^{\bm{\rho}(t)}(\{\bm{w}^{d,cv}\}^\top \bm{a} - \pi^{min}\{\bm{w}^d\}^\top(\bm{I}-\bm{r}) \bm{c}\bm{s}^* ) \nonumber \\
         \leq & \frac{1}{\pi^{min}} \mathbb{E}^{\bm{\rho}(t)}(\{\bm{w}^{d,cv}\}^\top \bm{a} - \frac{\pi^{min}}{\pi^{max}}\{\bm{w}^{d,cv}\}^\top(\bm{I}-\bm{r}) \bm{c}\bar{\bm{s}} ) + \frac{1}{\pi^{max}} \mathbb{E}^{\bm{\rho}(t)} (\chi_1) \nonumber \\
         \leq & \frac{1}{\pi^{min}} \mathbb{E}^{\bm{\rho}(t)}\left(\{\bm{w}^{d,cv}\}^\top ( \bm{a} - \frac{\pi^{min}}{\pi^{max}} (\bm{I}-\bm{r}) \bm{c}\bar{\bm{s}} )\right) + \frac{1}{\pi^{max}} \mathbb{E}^{\bm{\rho}(t)}(\chi_1) \nonumber \\
         \leq & -\frac{1}{\pi^{min}} \epsilon \mathbb{E}^{\bm{\rho}(t)}(\{\bm{w}^{d,cv}\}^\top \bm{1}) + \frac{1}{\pi^{max}}\mathbb{E}^{\bm{\rho}(t)}(\chi_1) \nonumber \\
         \leq & -\epsilon \mathbb{E}^{\bm{\rho}(t)}(\{\bm{w}^d\}^\top \bm{1}) + \frac{1}{\pi^{max}}\mathbb{E}^{\bm{\rho}(t)}(\chi_1)
    \end{align}

    Note that, if the same weights are used for upstream and downstream movement states for pressure calculation, like Q-MP \citep{varaiya2013max}, D-MP \citep{liu2022novel}, and CV-MP \citep{tan2025cvmp}, $\chi_1 = 0$. This suggests that our stability proof is more generalized.
    Specifically, in our cases, 
    \begin{align}
        0\leq w^{u,cv}_{i,o}(t) - w^{d,cv}_{i,o}(t) 
        \begin{cases}
            \leq w^{u,cv}_{i,o}(t) \quad (i,o) \in \mathcal{M_N} \\
            =0 \quad (i,o) \in \mathcal{M_F} 
        \end{cases}
    \end{align}

    Obviously, for the second term of $\eta_1$ we have 
    \begin{align}
        \frac{1}{\pi^{max}}\mathbb{E}^{\bm{\rho}(t)}(\chi_1) &\leq \frac{1}{\pi^{max}}\mathbb{E}^{\bm{\rho}(t)}((\{\bm{w}^{u,cv}\}^\top - \{\bm{w}^{d,cv}\}^\top)\bm{c}\bm{s}^*) \leq \frac{1}{\pi^{max}}\mathbb{E}^{\bm{\rho}(t)}((\{\bm{w}^{u,cv}\}^\top - \{\bm{w}^{d,cv}\}^\top)\bm{c}) \nonumber \\
        &\leq \frac{1}{\pi^{max}}\sum_{\mathcal{M}_{\mathcal{N}}}\mathbb{E}^{\bm{\rho}(t)} (c\int_0^{L_i} \tau^u(x,t) \rho^{cv}(x,t) \mathrm{d}x)\leq \frac{1}{\pi^{max}}\sum_{\mathcal{M}_{\mathcal{N}}} c \tau^{u,max} L_i \rho^{max} \triangleq K_5.
    \end{align}

    The remainder of the proof, starting from Eq.~\eqref{eq: K5}, is identical to Theorem \ref{thm: stability of transit-MP}.
\end{proof}

\section{Impact of link segmentation} \label{appendix: seg results}
As we discussed in Section \ref{ssec: fully CV}, including all vehicles directly on the link in pressure calculations may cause premature inclusion of vehicles on long links, resulting in wasted green time, particularly when prioritizing vehicles with high occupancy. Therefore, this section will test the effects of several link segmentation strategies on MP controllers in fully CV environments. The segmentation strategies vary by the length of the links and the decision steps used. S0 uses the actual link length with no segmentation (default no segmentation), S1 segments links based on the shortest link length (about 90 m), while S2-S5 segment links based on decision steps, ranging from the travel distance of free-flow vehicles during one decision step (S2) to multiples of decision steps (S3-S5), with S2 at 140 m, S3 at 280 m, S4 at 420 m, and S5 at 560 m. Note that when segmenting the links, we only change the set of vehicles used to calculate pressure, without recalculating the state of the vehicles. The vehicle delay in this section is corrected by considering the time loss of spillover vehicles.

Fig. \ref{fig: seg results} presents the performance of three MP controllers, i.e., Transit-MP, OCC-MP, and eOCC-MP, under various link segmentation strategies. It can be observed that different MP controllers exhibit distinct preferences for link segmentation strategies. Transit-MP performs best at S3 (420m) and even surpasses no segmentation (S0, with actual links longer than 900 m). Without segmentation, all vehicles on a long link influence pressure, even those far from the stop line; Transit-MP, using travel time as the basis for pressure calculation does discount distant vehicles, but including extremely long upstream tails does not improve control. Conversely, overly short segments (S1, 90 m) fragment queues and reduce inter-movement travel-time contrast inside segments, causing the controller to misread relative pressures. S3 can effectively cover vehicle queues and exclude the impact of upstream tail vehicles most of the time, providing a more accurate reflection of road traffic conditions and thus delivering superior performance.
It is worth emphasizing that Transit-MP consistently avoids spillovers, underscoring its adaptability to various link lengths.

eOCC-MP outperforms OCC-MP in every scenario because it accounts for transit station effects in the pressure calculation. It should be noted that this advantage diminishes as link lengths are segmented shorter. This is because in scenarios with shorter links, the green lights wasted by OCC-MP due to transit priority can be reduced, thereby weakening the advantage gained from accounting for station impacts. For both counted-based controllers, OCC-MP and eOCC-MP, any segmentation strategies (S1–S5) beat S0 on delay, with shorter segments generally yielding lower measured delays but substantially more spillovers. Mechanistically, when links are segmented into short (S1-S2), queues often extend beyond the segmented link boundaries. OCC-MP and eOCC-MP, relying on vehicle counts, tend to allocate green time more evenly across movements, which is sub-optimal when long queues concentrate on specific movements. In our corridor, this manifests as spillovers, particularly at traffic generation point 5. In S3-S5 with longer segmented links, the vehicle counts within segmented links better reflect movement queues, thereby accelerating the dissipation of long queues and reducing spillovers. However, this comes at the cost of allowing more vehicles to enter the road network, which increases vehicle delays.

In summary, when comparing the optimal performance of each MP controller under various segmentation strategies, Transit-MP at S3 (420 m) still outperforms OCC-MP and eOCC-MP at S1/S2 (140 m/280 m), achieving lower vehicle delays and fewer spillover vehicles.

\begin{figure}[ht]
    \centering
    \includegraphics[width=0.9\textwidth]{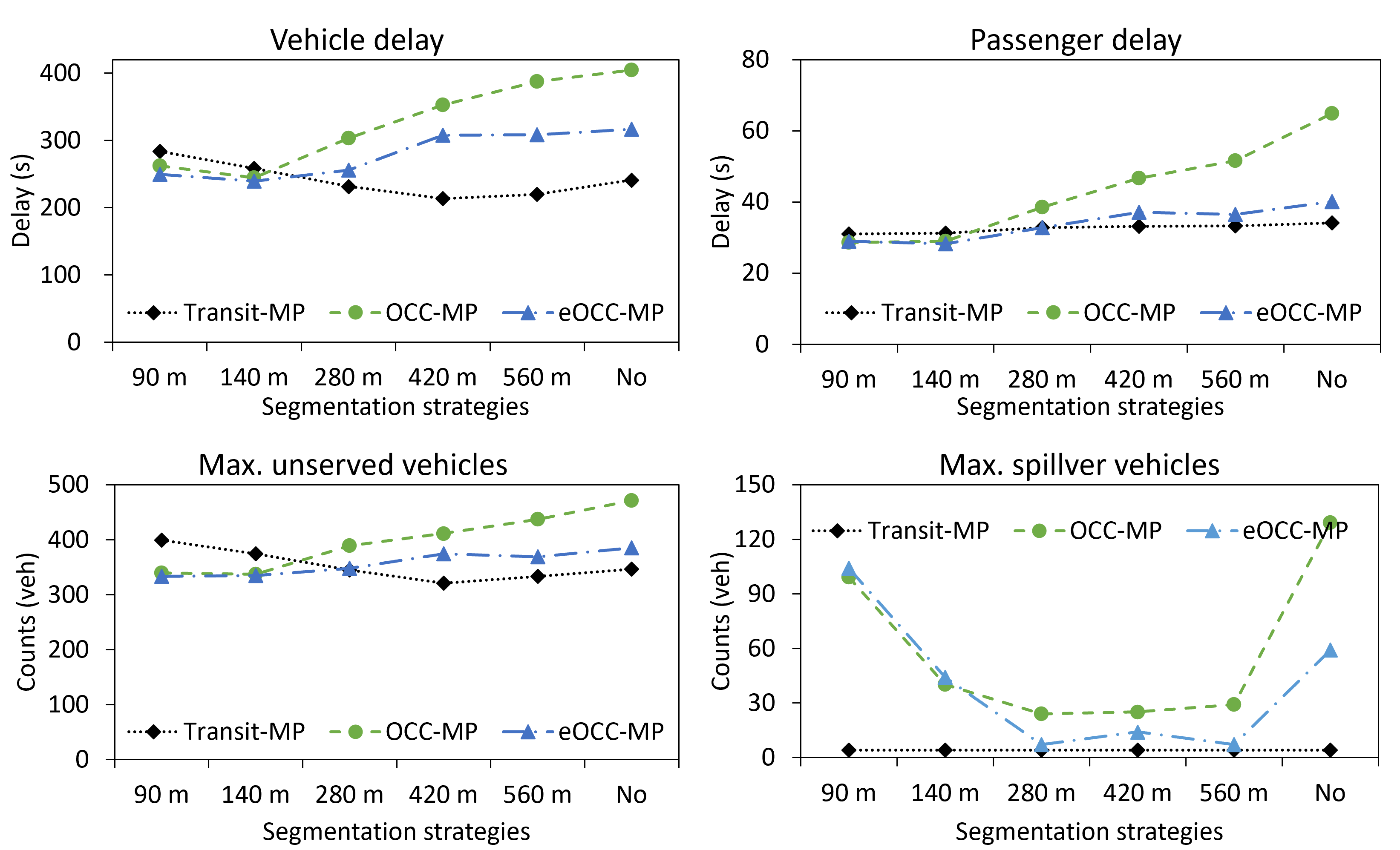}
    \caption{Performance of MP controllers under various link segmentation strategies.}
    \label{fig: seg results}  
\end{figure}

\bibliographystyle{apalike} 
\bibliography{Transit-MP_TR-B}

\begin{thebibliography}{}

\bibitem[Ahmed et~al., 2024a]{ahmed2024c}
Ahmed, T., Liu, H., and Gayah, V.~V. (2024a).
\newblock C-mp: A decentralized adaptive-coordinated traffic signal control using the max pressure framework.
\newblock {\em arXiv preprint arXiv:2407.01421}.

\bibitem[Ahmed et~al., 2024b]{ahmed2024occ}
Ahmed, T., Liu, H., and Gayah, V.~V. (2024b).
\newblock Occ-mp: A max-pressure framework to prioritize transit and high occupancy vehicles.
\newblock {\em Transportation Research Part C: Emerging Technologies}, 166:104795.

\bibitem[Cao et~al., 2021]{cao2021day}
Cao, Y., Tang, K., Sun, J., and Ji, Y. (2021).
\newblock Day-to-day dynamic origin--destination flow estimation using connected vehicle trajectories and automatic vehicle identification data.
\newblock {\em Transportation Research Part C: Emerging Technologies}, 129:103241.

\bibitem[Chen et~al., 2022]{chen2022backpressure}
Chen, H., Wu, F., Hou, K., and Qiu, T.~Z. (2022).
\newblock Backpressure-based distributed dynamic route control for connected and automated vehicles.
\newblock {\em IEEE Transactions on Intelligent Transportation Systems}, 23(11):20953--20964.

\bibitem[Cvijovic et~al., 2022]{cvijovic2022conditional}
Cvijovic, Z., Zlatkovic, M., Stevanovic, A., and Song, Y. (2022).
\newblock Conditional transit signal priority for connected transit vehicles.
\newblock {\em Transportation research record}, 2676(2):490--503.

\bibitem[Gregoire et~al., 2014]{gregoire2014capacity}
Gregoire, J., Qian, X., Frazzoli, E., De~La~Fortelle, A., and Wongpiromsarn, T. (2014).
\newblock Capacity-aware backpressure traffic signal control.
\newblock {\em IEEE Transactions on Control of Network Systems}, 2(2):164--173.

\bibitem[Guo et~al., 2019]{guo2019urban}
Guo, Q., Li, L., and Ban, X.~J. (2019).
\newblock Urban traffic signal control with connected and automated vehicles: A survey.
\newblock {\em Transportation research part C: emerging technologies}, 101:313--334.

\bibitem[Jia and Wong, 2023]{jia2023uncertainty}
Jia, S. and Wong, W. (2023).
\newblock Uncertainty estimation of connected vehicle penetration rate.
\newblock {\em Transportation Science}, 57(5):1160--1176.

\bibitem[Kuch{\'a}r et~al., 2023]{kuchar2023passenger}
Kuch{\'a}r, P., Pirn{\'\i}k, R., Janota, A., Malobick{\`y}, B., Kub{\'\i}k, J., and {\v{S}}i{\v{s}}mi{\v{s}}ov{\'a}, D. (2023).
\newblock Passenger occupancy estimation in vehicles: A review of current methods and research challenges.
\newblock {\em Sustainability}, 15(2):1332.

\bibitem[Kwesiga et~al., 2025]{kwesiga2025analysis}
Kwesiga, D.~K., Guin, A., and Hunter, M. (2025).
\newblock Analysis of bus dwell times from automated passenger count data and the impact of dwell-time variability on the performance of transit signal priority.
\newblock {\em Public Transport}, pages 1--23.

\bibitem[Li et~al., 2017]{li2017real}
Li, F., Tang, K., Yao, J., and Li, K. (2017).
\newblock Real-time queue length estimation for signalized intersections using vehicle trajectory data.
\newblock {\em Transportation Research Record}, 2623(1):49--59.

\bibitem[Li and Jabari, 2019]{li2019position}
Li, L. and Jabari, S.~E. (2019).
\newblock Position weighted backpressure intersection control for urban networks.
\newblock {\em Transportation Research Part B: Methodological}, 128:435--461.

\bibitem[Liang et~al., 2023]{liang2023two}
Liang, S., Leng, R., Zhang, H., and Zhao, J. (2023).
\newblock Two-stage transit signal priority control method to improve reliability of bus operation considering stochastic process.
\newblock {\em Transportation Research Record}, 2677(2):1713--1727.

\bibitem[Lin et~al., 2012]{lin2012efficient}
Lin, S., De~Schutter, B., Xi, Y., and Hellendoorn, H. (2012).
\newblock Efficient network-wide model-based predictive control for urban traffic networks.
\newblock {\em Transportation Research Part C: Emerging Technologies}, 24:122--140.

\bibitem[Liu and Gayah, 2022]{liu2022novel}
Liu, H. and Gayah, V.~V. (2022).
\newblock A novel max pressure algorithm based on traffic delay.
\newblock {\em Transportation Research Part C: Emerging Technologies}, 143:103803.

\bibitem[Liu and Gayah, 2023]{liu2023total}
Liu, H. and Gayah, V.~V. (2023).
\newblock Total-delay-based max pressure: a max pressure algorithm considering delay equity.
\newblock {\em Transportation Research Record}, 2677(6):324--339.

\bibitem[Liu and Gayah, 2024]{liu2024n}
Liu, H. and Gayah, V.~V. (2024).
\newblock N-mp: A network-state-based max pressure algorithm incorporating regional perimeter control.
\newblock {\em Transportation Research Part C: Emerging Technologies}, 168:104725.

\bibitem[Liu et~al., 2024]{liu2024max}
Liu, H., Gayah, V.~V., and Levin, M.~W. (2024).
\newblock A max pressure algorithm for traffic signals considering pedestrian queues.
\newblock {\em Transportation Research Part C: Emerging Technologies}, 169:104865.

\bibitem[Mercader et~al., 2020]{mercader2020max}
Mercader, P., Uwayid, W., and Haddad, J. (2020).
\newblock Max-pressure traffic controller based on travel times: An experimental analysis.
\newblock {\em Transportation Research Part C: Emerging Technologies}, 110:275--290.

\bibitem[Neely, 2010]{neely2022stochastic}
Neely, M. (2010).
\newblock {\em Stochastic network optimization with application to communication and queueing systems}.
\newblock Springer Nature.

\bibitem[Strong et~al., 2006]{strong2006new}
Strong, D.~W., Nagui, R., and Courage, K. (2006).
\newblock New calculation method for existing and extended hcm delay estimation procedure.
\newblock In {\em Proceedings of the 87th Annual Meeting Transportation Research Board}.

\bibitem[Tan et~al., 2025a]{tan2025connected_robust}
Tan, C., Ding, Y., Yang, K., Zhu, H., and Tang, K. (2025a).
\newblock Connected vehicle data-driven robust optimization for traffic signal timing: Modeling traffic flow variability and errors.
\newblock {\em IEEE Transactions on Intelligent Transportation Systems}.

\bibitem[Tan et~al., 2025b]{tan2025cvmp}
Tan, C., Sun, D., Liu, H., Rinaldi, M., and van Lint, H. (2025b).
\newblock Cv-mp: Max-pressure control in heterogeneously distributed and partially connected vehicle environments.
\newblock {\em arXiv preprint arXiv:2505.05258}.

\bibitem[Tan et~al., 2025c]{tan2025connected_stochastic}
Tan, C., Wang, Q., Liang, J., and Yang, K. (2025c).
\newblock A connected vehicle-based contextual stochastic optimization model for real-time traffic signal timing.
\newblock {\em IEEE Transactions on Intelligent Transportation Systems}.

\bibitem[Tan and Yang, 2024]{tan2024privacy}
Tan, C. and Yang, K. (2024).
\newblock Privacy-preserving adaptive traffic signal control in a connected vehicle environment.
\newblock {\em Transportation research part C: emerging technologies}, 158:104453.

\bibitem[Tan et~al., 2021a]{tan2021cumulative}
Tan, C., Yao, J., Ban, X., and Tang, K. (2021a).
\newblock Cumulative flow diagram estimation and prediction based on sampled vehicle trajectories at signalized intersections.
\newblock {\em IEEE Transactions on Intelligent Transportation Systems}, 23(8):11325--11337.

\bibitem[Tan et~al., 2025d]{tan2025privacy}
Tan, C., Yao, J., Tang, K., Liang, J., and Yin, G. (2025d).
\newblock Privacy-preserving cycle-based arrival profile estimation based on cross-company connected vehicles.
\newblock {\em IEEE Transactions on Consumer Electronics}.

\bibitem[Tan et~al., 2021b]{tan2019cycle}
Tan, C., Yao, J., Tang, K., and Sun, J. (2021b).
\newblock Cycle-based queue length estimation for signalized intersections using sparse vehicle trajectory data.
\newblock {\em IEEE Transactions on Intelligent Transportation Systems}, 22(1):91--106.

\bibitem[Tan et~al., 2025e]{tan2025robust}
Tan, C., Yao, J., Zhu, H., and Tang, K. (2025e).
\newblock Robust estimation of traffic arrival rates at signalized intersections with sparse internet of vehicles.
\newblock {\em IEEE Internet of Things Journal}.

\bibitem[Tan et~al., 2008]{tan2008prediction}
Tan, C.-W., Park, S., Liu, H., Xu, Q., and Lau, P. (2008).
\newblock Prediction of transit vehicle arrival time for signal priority control: Algorithm and performance.
\newblock {\em IEEE Transactions on Intelligent Transportation Systems}, 9(4):688--696.

\bibitem[Tang et~al., 2020]{tang2020tensor}
Tang, K., Tan, C., Cao, Y., Yao, J., and Sun, J. (2020).
\newblock A tensor decomposition method for cycle-based traffic volume estimation using sampled vehicle trajectories.
\newblock {\em Transportation research part C: emerging technologies}, 118:102739.

\bibitem[Varaiya, 2013]{varaiya2013max}
Varaiya, P. (2013).
\newblock Max pressure control of a network of signalized intersections.
\newblock {\em Transportation Research Part C: Emerging Technologies}, 36:177--195.

\bibitem[Wang et~al., 2020]{wang2020optimizing}
Wang, H., Zhu, M., Hong, W., Wang, C., Tao, G., and Wang, Y. (2020).
\newblock Optimizing signal timing control for large urban traffic networks using an adaptive linear quadratic regulator control strategy.
\newblock {\em IEEE Transactions on Intelligent Transportation Systems}, 23(1):333--343.

\bibitem[Wang et~al., 2024]{wang2024traffic}
Wang, X., Jerome, Z., Wang, Z., Zhang, C., Shen, S., Kumar, V.~V., Bai, F., Krajewski, P., Deneau, D., Jawad, A., et~al. (2024).
\newblock Traffic light optimization with low penetration rate vehicle trajectory data.
\newblock {\em Nature communications}, 15(1):1306.

\bibitem[Wang et~al., 2022]{wang2022learning}
Wang, X., Yin, Y., Feng, Y., and Liu, H.~X. (2022).
\newblock Learning the max pressure control for urban traffic networks considering the phase switching loss.
\newblock {\em Transportation Research Part C: Emerging Technologies}, 140:103670.

\bibitem[Wong et~al., 2019]{wong2019estimation}
Wong, W., Shen, S., Zhao, Y., and Liu, H.~X. (2019).
\newblock On the estimation of connected vehicle penetration rate based on single-source connected vehicle data.
\newblock {\em Transportation Research Part B: Methodological}, 126:169--191.

\bibitem[Wu et~al., 2018]{wu2018delay}
Wu, J., Ghosal, D., Zhang, M., and Chuah, C.-N. (2018).
\newblock Delay-based traffic signal control for throughput optimality and fairness at an isolated intersection.
\newblock {\em IEEE Transactions on Vehicular Technology}, 67(2):896--909.

\bibitem[Xu et~al., 2024a]{xu2024smoothing}
Xu, T., Barman, S., and Levin, M.~W. (2024a).
\newblock Smoothing-mp: A novel max-pressure signal control considering signal coordination to smooth traffic in urban networks.
\newblock {\em Transportation Research Part C: Emerging Technologies}, 166:104760.

\bibitem[Xu et~al., 2022]{xu2022integrating}
Xu, T., Barman, S., Levin, M.~W., Chen, R., and Li, T. (2022).
\newblock Integrating public transit signal priority into max-pressure signal control: Methodology and simulation study on a downtown network.
\newblock {\em Transportation Research Part C: Emerging Technologies}, 138:103614.

\bibitem[Xu et~al., 2024b]{xu2024ped}
Xu, T., Bika, Y., and Levin, M.~W. (2024b).
\newblock Ped-mp: A pedestrian-friendly max-pressure signal control policy for city networks.
\newblock {\em Journal of Transportation Engineering, Part A: Systems}, 150(7):04024028.

\bibitem[Yan et~al., 2019]{yan2019network}
Yan, H., He, F., Lin, X., Yu, J., Li, M., and Wang, Y. (2019).
\newblock Network-level multiband signal coordination scheme based on vehicle trajectory data.
\newblock {\em Transportation Research Part C: Emerging Technologies}, 107:266--286.

\bibitem[Yang and Menendez, 2018]{yang2018queue}
Yang, K. and Menendez, M. (2018).
\newblock Queue estimation in a connected vehicle environment: A convex approach.
\newblock {\em IEEE Transactions on Intelligent Transportation Systems}, 20(7):2480--2496.

\bibitem[Yao et~al., 2019]{yao2019optimization}
Yao, J., Tan, C., and Tang, K. (2019).
\newblock An optimization model for arterial coordination control based on sampled vehicle trajectories: The stream model.
\newblock {\em Transportation research part C: emerging technologies}, 109:211--232.

\bibitem[Zaidi et~al., 2016]{zaidi2016back}
Zaidi, A.~A., Kulcs{\'a}r, B., and Wymeersch, H. (2016).
\newblock Back-pressure traffic signal control with fixed and adaptive routing for urban vehicular networks.
\newblock {\em IEEE transactions on intelligent transportation systems}, 17(8):2134--2143.

\bibitem[Zheng and Liu, 2017]{zheng2017estimating}
Zheng, J. and Liu, H.~X. (2017).
\newblock Estimating traffic volumes for signalized intersections using connected vehicle data.
\newblock {\em Transportation Research Part C: Emerging Technologies}, 79:347--362.

\end{thebibliography}
\end{document}